\newtheorem{theorem}{Theorem}[section]
\newtheorem{lemma}[theorem]{Lemma}
\theoremstyle{definition}
\newtheorem{example}[theorem]{Example}
\newtheorem{proposition}[theorem]{Proposition}
\newtheorem{corollary}[theorem]{Corollary}
\theoremstyle{remark}
\newcommand{\Mod}[1]{\ (\mathrm{mod}\ #1)}
\begin{document} 
	\begin{center}
		\textbf{Decomposition of complete tripartite graphs into 5-cycles\\
			using the graph adjacency matrix}
		
			Bahareh Kudarzi$^1$, E. S. Mahmoodian$^1$, Zahra Naghdabadi$^1$\\
			$^1$\textit{Department of Mathematical Sciences, Sharif University of Technology, Tehran, Iran}
	\end{center}		
		
		\begin{abstract}
			The problem of decomposing a complete tripartite graph into $5$-cycles was first proposed in 1995 by Mahmoodian and Mirzakhani and since then many attempts have been made to decompose such graphs into $5$-cycles. Such attempts were partially successful but parts of the problem still remain open. In this paper, we investigate the problem deeper, decompose more tripartite graphs into 5-cycles, and introduce the Graph Adjacency Matrix (GAM) method for cycle decomposition in general. GAM method transforms the cycle decomposition problem to covering squares with certain polygons. This new formulation is easier to solve and enables us to find explicit decompositions for numerous cases that were not decomposed before.\\
			
			\textbf{Keywords:} Cycle decomposition, Complete tripartite graph
		\end{abstract}

	\section*{Introduction}
	In mathematics, decomposition of general structures into simple specific units provides important information to the topic. In graph theory, it is always a good question to ask whether a graph $G$ can be decomposed into cycles and paths or not. By a Theorem of Nash-Williams, a graph has a decomposition into cycles if and only if it does not contain an odd cut \cite{nash1960decomposition,thomassen2017nash}. 
	
	It is known that bipartite graphs only have cycles of even length. It was in 1981 that Sotteau proved the complete bipartite graph $K_{m,n}$ can be decomposed into cycles of length $2k$ whenever $2k$ divides the number of edges and both $m$ and $n$ are even \cite{2}. In 1966, Rosa had proved that $K_{n}$ admits a decomposition into 5-cycles if and only if $n$ is odd and the number of edges is a multiple of 5 \cite{13}.
	
	Considering a complete tripartite graph $K_{r,s,t}$, it is easy to check that $K_{r,s,t}$ can be decomposed into $3$-cycles if and only if $r=s=t$. The next interesting case is about decomposition of $K_{r,s,t}$ into $5$-cycles. In 1995, Mahmoodian and Mirzakhani stated this problem and introduced three necessary conditions for a complete tripartite graph $K_{r,s,t}$ to have a decomposition into 5-cycles \cite{3}. The necessary conditions are as follows.
	\begin{itemize}
		\item $5\ | \ rs + rt + st$,
		\item $r\equiv s\equiv t \ \Mod{2}$, 
		\item $t\leq 4rs/(r+s)$.
	\end{itemize}
	In the same paper, sufficiency of these necessary conditions was proved for the special case of $K_{r,r,s}$ except when 5 divides $r$ but not $s$, and conjectured in the general case \cite{3}. Henceforth, here we call the above conditions “\textit{the necessary conditions}” and sufficiency of necessary conditions is regarded as “\textit{Conjecture}”.
	In 2000 \cite{4}, Cavenagh introduced a systematic representation for decomposing tripartite graphs into 5-cycles and proved that $K_{r,r,s}$ can be decomposed into $5$-cycles when $5$ divides $r$ but not $s$. Using the same representation, he proved the correctness of Conjecture for graphs with even parts in 2002 \cite{12}.
	
	In 2011 \cite{6}, the case where $K_{r,s,t}$ has asymptotically similar parts was considered. Correctness of the Conjecture was proved for $K_{r,s,t}$ where $r,s,t$ are odd and $100 \leq r\leq s\leq t \leq \kappa r$, for $\kappa \simeq 1.08$. In 2012 \cite{7}, the coefficient $\kappa$ was increased to $\frac{5}{3}$. 
	In 2019 , $K_{r,s,t}$ where $r,s,t$ are multiples of $5$ was studied and the Conjecture was proved to be true when $t+90 \leq 4rs\ (r+s)$ and $t\neq s+10$ \cite{8}.
	
	Until now the sufficiency of the necessary conditions is proved in all cases except for odd cases with at least one part coprime with $5$ and the case when $ r \leq s \leq t $, and $t \geq \frac{5}{3} r$. Also, in some cases with small parts, existence of a $5$-cycle decomposition is still unknown. Here, we fill some of these gaps and introduce a simple method of finding cycle decompositions.
	
	In the following, we investigate the problem deeper and find relations between triplets $(r,s,t)$ such that the complete tripartite graph $k_{r,s,t}$ satisfies the necessary conditions, stated previously. Then, we reformulate the problem so that it becomes easier to solve. In this regard, the Graph Adjacency Matrix (GAM) method is introduced that transforms the problem of finding \textit{trades} in Latin squares to covering sub-matrices with polygons. Finally, using GAM method and the Mixing Theorem, we decompose more tripartite graphs, including graphs with odd parts, into 5-cycles.
	
	\section{Decomposition of graphs into 5-cycles} 

	There are some relations between triplets satisfying the necessary conditions.
	In tripartite graphs, each $5$-cycle must have at least one vertex in each part. Figure \ref{fig:1} shows the three possible types of 5-cycles.\\
	\begin{figure}[H] \label{fig 1}
		\centering
		\begin{subfigure}{0.25\textwidth}
			\includegraphics[width=\linewidth]{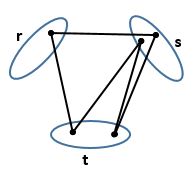}
			\caption{type 1} \label{fig:1a}
		\end{subfigure}
		\begin{subfigure}{0.25\textwidth}
			\includegraphics[width=\linewidth]{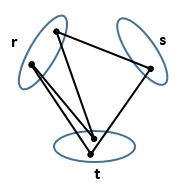}
			\caption{type 2} \label{fig:1b}
		\end{subfigure}
		\begin{subfigure}{0.25\textwidth}
			\includegraphics[width=\linewidth]{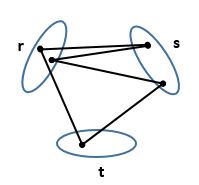}
			\caption{type 3} \label{fig:1c}
		\end{subfigure}
		\caption{The three possible types of 5-cycles are shown in (a), (b), and (c).} \label{fig:1}
	\end{figure}
	Let $c_{1}, c_{2}$, and $c_{3}$ be the number of cycles of type 1,2, and 3, respectively. By counting the number of edges between any two parts we have
	$$3c_{1}+c_{2}+c_{3}= st$$
	$$c_{1}+3c_{2}+c_{3}= rt$$
	$$c_{1}+c_{2}+3c_{3}= rs$$
	Therefore, $c_{1}, c_{2}$, and $c_{3}$ can be computed explicitly as follows.
	\begin{equation}\label{prop 2.2}
	\begin{multlined}
	c_{1}= (4st-rt-rs)/10,\\
	c_{2}=  (-st+4rt-rs)/10, \\
	c_{3}= (-st-rt+4rs)/10. 
	\end{multlined}
	\end{equation}
	
	Also, when we have a decomposition of $K_{r,s,t}$ at hand, by combining $5$-cycles together we can decompose larger graphs. One way of combining graphs gives Proposition \ref{prop2.3}.
	 It is noted that, the convention $r \leq s \leq t$ is not used unless stated directly.
	\begin{proposition} \label{prop2.3} \cite{3}
		Assume that for some $r$, $s$, and $t$ the graph $K_{r,s,t}$ has a decomposition into $5$-cycles. Then $K_{nr, ns, nt}$ also has a decomposition into $5$-cycles for all $n \in \mathbb{N}$.
	\end{proposition}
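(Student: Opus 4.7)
The plan is to prove this via a ``blow-up'' construction. My first step would be to identify $K_{nr,ns,nt}$ as the lexicographic blow-up of $K_{r,s,t}$: replace each vertex $v$ of $K_{r,s,t}$ by an independent set $\{v^0,v^1,\ldots,v^{n-1}\}$ (respecting the tripartition), so that each original edge $uv$ is replaced by a copy of $K_{n,n}$ between the copies of $u$ and the copies of $v$.

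Next, I would take an existing 5-cycle decomposition $D$ of $K_{r,s,t}$ (which exists by hypothesis) and show that the blow-up of each $C\in D$, a graph isomorphic to $C_5[\overline{K_n}]$, itself decomposes into $n^2$ 5-cycles. Since the blow-ups of distinct cycles in $D$ are edge-disjoint and every edge of $K_{nr,ns,nt}$ sits above a unique edge of $K_{r,s,t}$ (and hence belongs to the blow-up of a unique cycle of $D$), concatenating these local decompositions will yield the desired global decomposition of $K_{nr,ns,nt}$.

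The crux of the argument, and what I expect to be the main obstacle, is therefore the auxiliary claim that $C_5[\overline{K_n}]$ admits a 5-cycle decomposition for \emph{every} $n$. A naive linear ansatz such as $(v_1^a,v_2^b,v_3^{a+b},v_4^{a+2b},v_5^{a+3b})$ indexed by $(a,b)\in\mathbb{Z}_n^2$ tends to fail when $\gcd(n,k)>1$ for some coefficient $k$, because the closing edge $v_5v_1$ no longer produces a bijection on pairs. To sidestep any divisibility hypothesis on $n$, I would instead index by $(a,b)\in\mathbb{Z}_n^2$ the cycles
$$C_{a,b}=(v_1^{a},\;v_2^{b},\;v_3^{b-a},\;v_4^{-a},\;v_5^{-b}),$$
and verify that for each of the five edge-types $v_k^{i}v_{k+1}^{j}$ (with $k$ taken mod $5$) the map sending $(a,b)$ to the edge it covers is a bijection of $\mathbb{Z}_n^2$ onto the $n^2$ edges of that type. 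In every case this reduces to solving a $2\times 2$ linear system over $\mathbb{Z}_n$ whose matrix has determinant $\pm1$, so it is solvable uniquely regardless of $n$. Once this lemma is in hand, the remaining bookkeeping (gathering the $|D|\cdot n^2$ resulting 5-cycles and observing that each edge of $K_{nr,ns,nt}$ is covered exactly once) is immediate.
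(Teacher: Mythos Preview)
Your argument is correct. The family $C_{a,b}=(v_1^{a},v_2^{b},v_3^{b-a},v_4^{-a},v_5^{-b})$ does the job: for each of the five edge-types the map $(a,b)\mapsto(\text{endpoint pair})$ is $\mathbb{Z}_n$-linear with determinant $1$ (the five matrices are $\left(\begin{smallmatrix}1&0\\0&1\end{smallmatrix}\right)$, $\left(\begin{smallmatrix}0&1\\-1&1\end{smallmatrix}\right)$, $\left(\begin{smallmatrix}-1&1\\-1&0\end{smallmatrix}\right)$, $\left(\begin{smallmatrix}-1&0\\0&-1\end{smallmatrix}\right)$, $\left(\begin{smallmatrix}0&-1\\1&0\end{smallmatrix}\right)$), hence a bijection for every $n$. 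Since the five vertices of $C_{a,b}$ lie in five distinct fibres they are automatically distinct, and your global bookkeeping is sound.

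The paper, however, takes a different and more economical route. Instead of decomposing $K_{r,s,t}$ first and then blowing up each $5$-cycle, it decomposes $K_{nr,ns,nt}$ directly into $n^{2}$ edge-disjoint copies of $K_{r,s,t}$: split each part into $n$ blocks of sizes $r$, $s$, $t$, take any Latin square $L$ of order $n$, and for each cell $(i,j)$ with entry $k=L(i,j)$ use the complete tripartite subgraph on the $i$-th $r$-block, the $j$-th $s$-block, and the $k$-th $t$-block. This is nothing more than the triangle decomposition of $K_{n,n,n}$, and each resulting copy of $K_{r,s,t}$ is then decomposed by hypothesis; no auxiliary lemma about $C_5[\overline{K_n}]$ is needed. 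What your approach buys in exchange is generality: you have actually proved that the $n$-fold lexicographic blow-up of \emph{any} graph admitting a $5$-cycle decomposition again admits one, whereas the paper's argument relies on the complete tripartite structure.
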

	
	\begin{proof}[Proof Outline]
		Since $K_{n,n,n}$ can be decomposed into triangles, one can decompose $K_{nr, ns, nt}$ into $n^{2}$ copies of $K_{r, s, t}$. Then by combining $5$-cycles of each $K_{r, s, t}$, we obtain a decomposition for $K_{nr, ns, nt}$.\\
		\begin{figure}[H]
			\centering
			\includegraphics[width=6cm]{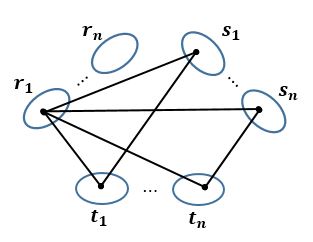}
			\captionof{figure}{Decomposing $K_{nr, ns, nt}$ into copies of $K_{r, s, t}$}
		\end{figure}	
	\end{proof}
	
	Henceforth, we define $K_{r, s, t}$ as a “\textit{primitive}” graph, when it satisfies the necessary conditions in which $(r, s, t)$ is not a multiple of another triplet $(r', s', t')$. For primitive graphs, $ \gcd (r, s, t) =1,2$, or $5$.
	
	\begin{corollary} \cite{4} \label{mod10}
		If the complete tripartite graph $K_{r, s, t}$ can	be decomposed into $5$-cycles, then at least two of the parts are equal modulo $5$.
	\end{corollary}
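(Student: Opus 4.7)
My plan is to argue by contradiction, invoking only the first necessary condition $5\mid rs+rt+st$ (which is forced by the integrality of the cycle counts $c_{1},c_{2},c_{3}$ computed above). Suppose, toward a contradiction, that $r,s,t$ are pairwise distinct modulo $5$. Then $\{r,s,t\}\pmod 5$ is a $3$-element subset of $\mathbb{Z}/5\mathbb{Z}$; write $\{a,b\}$ for the two residues of $\mathbb{Z}/5\mathbb{Z}$ not represented by $r,s,t$.

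Next I would extract $rs+rt+st\pmod 5$ from a global identity on $\{0,1,2,3,4\}$. From
\[\sum_{0\le i<j\le 4} ij \;=\; \tfrac{1}{2}\!\left[(0+1+2+3+4)^{2}-(0^{2}+1^{2}+2^{2}+3^{2}+4^{2})\right] \;=\; 35 \;\equiv\; 0\pmod{5},\]
partitioning the index pairs of $\{0,1,2,3,4\}$ into those inside $\{r,s,t\}$, the single pair $\{a,b\}$, and cross pairs, I get
\[rs+rt+st \;+\; ab \;+\; (r+s+t)(a+b) \;\equiv\; 0\pmod{5}.\]
Combining this with $r+s+t \equiv -(a+b)\pmod 5$ (since $0+1+2+3+4=10\equiv 0$) reduces to
\[rs+rt+st \;\equiv\; a^{2}+ab+b^{2} \pmod{5}.\]

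The remaining step is to show that $a^{2}+ab+b^{2}\not\equiv 0\pmod 5$ whenever $a\not\equiv b\pmod 5$. The crispest way is the factorization $(a-b)(a^{2}+ab+b^{2}) = a^{3}-b^{3}$: since $a-b$ is invertible modulo $5$, vanishing of $a^{2}+ab+b^{2}$ would force $a^{3}\equiv b^{3}\pmod 5$. But cubing is a bijection on $\mathbb{Z}/5\mathbb{Z}$ (because $\gcd(3,4)=1$ and $|(\mathbb{Z}/5\mathbb{Z})^{*}|=4$), so this forces $a\equiv b\pmod 5$, contradicting our choice of $a\ne b$. Hence $rs+rt+st\not\equiv 0\pmod 5$, contradicting the first necessary condition. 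I do not foresee a real obstacle in this plan; as an alternative, the last algebraic step can be confirmed by direct inspection of all $\binom{5}{3}=10$ three-element subsets of $\mathbb{Z}/5\mathbb{Z}$.
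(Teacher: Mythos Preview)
Your argument is correct. It differs from the paper's proof in spirit: the paper simply enumerates all ordered triples $(r',s',t')\in\{0,1,2,3,4\}^{3}$ with $5\mid r's'+r't'+s't'$ and observes by inspection that in every case at least two coordinates coincide. You instead give a structural argument, reducing the claim to the nonvanishing of $a^{2}+ab+b^{2}$ over $\mathbb{Z}/5\mathbb{Z}$ for $a\not\equiv b$, which you settle via the bijectivity of cubing. Your route is tidier and avoids case-checking; the paper's enumeration has the compensating advantage that it produces the explicit list of admissible residue triples, which is then reused in the proof of the next theorem (the classification of $(r,s,t)\bmod 10$). One small remark: your bijectivity argument via $\gcd(3,4)=1$ covers $(\mathbb{Z}/5\mathbb{Z})^{*}$; to cover all of $\mathbb{Z}/5\mathbb{Z}$ (allowing $a=0$ or $b=0$) just note that $0^{3}=0$, or observe directly that $a^{2}+ab+b^{2}=b^{2}\not\equiv 0$ when $a=0$ and $b\neq 0$.
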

	\begin{proof}
		Let $r \equiv r' \ \Mod{5}$, $s \equiv s' \ \Mod{5}$ and $t \equiv t' \ \Mod{5}$, where $0 \leq r', s', t' \leq 4$. Then the triplet $(r', s', t')$ must belong to the following list:
		$$(0,0,0),  (0,0,1),  (0,0,2),  (0,0,3),  (0,0,4),
		(0,1,0),  (0,2,0),  (0,3,0),  (0,4,0),  (1,0,0), $$
		$$ (2,0,0),  (3,0,0), (4,0,0),  (1,1,2), (1,2,1),
		(2,1,1),  (1,3,3),  (3,1,3),  (3,3,1),  (2,2,4),$$
		$$(2,4,2),  (4,2,2),  (3,4,4),  (4,3,4),  (4,4,3).$$
		This follows from the fact that $rs + rt + st$ is divisible by $5$; it is straightforward to check that these are all the possible cases.
	\end{proof}
	More specifically, if $r,s$, and $t$ are all odd, then $(r,s,t)$ is equal to one of the following modulo 10 \cite{7}. 
		$$(1,1,7), (1,3,3), (3,9,9), (7,7,9), (1,5,5), (3,5,5), (5,5,5), (7,5,5), (9,5,5)$$
	Moreover in \cite{7}, the complete tripartite graphs $K_{11, 15, 25}$, $K_{13, 15, 25}$, $K_{17, 15, 25}$, $K_{19, 15, 25}$, and $K_{7, 17, 19}$ were decomposed into $5$-cycles. We use these decompositions to decompose larger graphs into 5-cycles.	
	
	\section{On necessary conditions}

	In this section, we investigate the problem deeper and investigate triplets $(r,s,t)$ such that the complete tripartite graph $k_{r,s,t}$ satisfies the necessary conditions, stated previously.
	\begin{lemma} \label{3.14}
		If the complete tripartite graph $K_{r,s,t}$ satisfies the necessary conditions, then\\ $K_{ar+ 5n, as+5n, at+5n}$ also satisfies the necessary conditions for every $a,n\in \mathbb{N}$.
	\end{lemma}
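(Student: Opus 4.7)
The strategy is to verify each of the three necessary conditions separately for the new triplet $(r',s',t') = (ar+5n,\, as+5n,\, at+5n)$, inheriting them directly from the corresponding conditions on $(r,s,t)$.

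For the divisibility condition $5 \mid r's' + r't' + s't'$, I would observe that every cross product of the form $(ax+5n)(ay+5n)$ differs from $a^2 xy$ by an explicit multiple of $5$. Summing gives $r's' + r't' + s't' \equiv a^2(rs+rt+st) \equiv 0 \pmod{5}$ by hypothesis. For the parity condition, $r' - s' = a(r-s)$ (and analogously for the other pairs), so the equalities modulo $2$ transfer immediately, regardless of the parity of $a$ and $n$.

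The only substantive calculation is condition 3. Writing $R = ar+5n$, $S = as+5n$, $T = at+5n$, and assuming without loss of generality that $t$ is the largest of $\{r,s,t\}$ (so that $T$ is the largest of $\{R,S,T\}$), I would expand
\begin{equation*}
4RS - T(R+S) \;=\; a^2\bigl(4rs - t(r+s)\bigr) \;+\; 5an\bigl(3(r+s) - 2t\bigr) \;+\; 50 n^2.
\end{equation*}
The first term is non-negative by hypothesis, and the third is obviously so. To handle the middle term it suffices to show $2t \leq 3(r+s)$; combining the hypothesis $2t \leq 8rs/(r+s)$ with the elementary bound $8rs \leq 3(r+s)^2$ (equivalent to the identity $3r^2 - 2rs + 3s^2 = 2(r^2+s^2) + (r-s)^2 \geq 0$) delivers exactly this.

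There is no real obstacle here: the lemma reduces to a routine verification once one notices that the shift by $5n$ is tailor-made to preserve divisibility by $5$ and that the inequality for condition 3 splits cleanly into three non-negative pieces. The only step deserving care is the auxiliary inequality $2t \leq 3(r+s)$, which must be extracted from the hypothesis $t \leq 4rs/(r+s)$ rather than taken for granted; otherwise the middle cross term could in principle flip sign.
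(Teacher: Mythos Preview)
Your argument is correct in every detail: the divisibility and parity checks are immediate, and your expansion
\[
4RS - T(R+S) \;=\; a^2\bigl(4rs - t(r+s)\bigr) + 5an\bigl(3(r+s) - 2t\bigr) + 50n^2
\]
is right. The auxiliary bound $2t \le 3(r+s)$, derived from $t(r+s) \le 4rs$ together with $8rs \le 3(r+s)^2$, is exactly what is needed to make the middle term non-negative, and your identity $3r^2 - 2rs + 3s^2 = 2(r^2+s^2) + (r-s)^2$ is a clean way to see it.

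There is nothing to compare against: the paper states this lemma without proof and immediately moves on. Your proposal therefore supplies the missing verification. The one point worth making explicit when you write it up is that the third necessary condition is understood with $t = \max\{r,s,t\}$, and since the map $x \mapsto ax+5n$ is strictly increasing, the largest part of the new triplet is indeed $T = at+5n$; you already say this, just make sure it survives into the final version.
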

	
	By Lemma \ref{3.14}, from each $(r,s,t)$ satisfying the necessary conditions we obtain infinitely many other triplets satisfying those conditions.	
	In \cite{3} the key point in finding a decomposition of $K_{r, r, s}$ into $5$-cycles, is writing $(r,r,s)$ satisfying the necessary conditions as a linear combination of two decomposed graphs $K_{1,3,3}$ and $K_{4,2,2}$.
	\begin{equation}\label{eq 1}
	\begin{bmatrix}
	r  \\
	s\\
	t
	\end{bmatrix}
	= 
	\begin{bmatrix}
	3 & 2  \\
	3 & 2  \\
	1 & 4
	\end{bmatrix}
	\begin{bmatrix}
	m \\
	n 
	\end{bmatrix}
	\end{equation}
	In other words, for all natural numbers $m$ and $n$, $K_{r, s, t}$ obtained from Equation (\ref{eq 1}) satisfies the necessary conditions. Since $(1,3,3)$, $(2,2,4)$, and $(5,5,5)$ are decomposable triplets, one may ask if any arbitrary linear combination of decomposable graphs yields a triplet $(r,s,t)$ satisfying the necessary conditions. The answer in general is negative but in some cases it works.
	\begin{lemma}
		The graph $K_{r,s,t}$ satisfies the necessary conditions if $(r, s, t)$ are obtained from the following equations for arbitrary $a,b,c \in \mathbb{N}$.
		\begin{center}
		\begin{equation*}		
			\begin{bmatrix}
			r  \\ s\\ t
			\end{bmatrix}
			=
			\begin{bmatrix}
			3 & 2 & 5 \\3 & 2 & 5 \\1 & 4 & 5
			\end{bmatrix}
			\begin{bmatrix}
			a \\b \\c
			\end{bmatrix}
			 ,\,\,\,
			\begin{bmatrix}
			r  \\s\\t
			\end{bmatrix}
			=
			\begin{bmatrix}
			3 & 2 & 17 \\	3 & 2 & 7 \\	1 & 4 & 19
			\end{bmatrix}
			\begin{bmatrix}
			a \\b \\c
			\end{bmatrix}
		\end{equation*}
		\end{center}
		
		\begin{center}
		\begin{equation*}
			\begin{bmatrix}
			r  \\s\\t
			\end{bmatrix}
			=
			\begin{bmatrix}
			5 & 10 & 20 \\3 & 4 & 30 \\	5 & 10 & 40
			\end{bmatrix}
			\begin{bmatrix}
			a \\b \\c
			\end{bmatrix}
			,\,\,\,
			\begin{bmatrix}
			r  \\s\\t
			\end{bmatrix}
			=
			\begin{bmatrix}
			5 & 10 & 15 \\3 & 4 & 11 \\	5 & 10 & 25
			\end{bmatrix}
			\begin{bmatrix}
			a \\b \\c
			\end{bmatrix}
		\end{equation*}	
	\end{center}
	\end{lemma}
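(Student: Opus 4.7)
The plan is to verify the three necessary conditions separately for each of the four matrices. Write the columns of a given matrix as $X, Y, Z$, so that $(r,s,t)^{\top} = aX + bY + cZ$. The parity condition is the easiest: I would just inspect each column modulo $2$, and observe that in every matrix each column has its three entries of a common parity (all-odd or all-even). Hence $aX + bY + cZ$ also has its three components of a common parity for every $a, b, c \in \mathbb{N}$, giving $r \equiv s \equiv t \pmod{2}$.

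For the divisibility condition $5 \mid rs + rt + st$, I would use the polarization of the quadratic form $\sigma(u_1, u_2, u_3) := u_1 u_2 + u_1 u_3 + u_2 u_3$, namely
$$\sigma(aX + bY + cZ) = a^2\sigma(X) + b^2\sigma(Y) + c^2\sigma(Z) + ab\,\beta(X,Y) + ac\,\beta(X,Z) + bc\,\beta(Y,Z),$$
with $\beta(U,V) := u_1 v_2 + u_2 v_1 + u_1 v_3 + u_3 v_1 + u_2 v_3 + u_3 v_2$. It then suffices to verify, per matrix, that $\sigma$ of each column and $\beta$ of each pair of columns is divisible by $5$—a finite integer check, made short by the fact that the first two columns of matrices~1 and 2 coincide, as do the first two columns of matrices~3 and 4.

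The main obstacle is the density condition $t \leq 4rs/(r+s)$, because it is a genuine nonlinear positivity statement and is not preserved in general under $\mathbb{N}$-linear combinations of valid triplets. My plan is to rewrite it as $4rs - t(r+s) \geq 0$ and expand the left-hand side as a homogeneous quadratic form in $a, b, c$; the content of the lemma is precisely that, for each of the four matrices, every coefficient of this quadratic form is nonnegative, so the inequality holds for all $a, b, c \in \mathbb{N}$. A shortcut bypasses this expansion only for the first matrix, whose third column is $(5,5,5)$: that case follows directly from Equation~(\ref{eq 1}) combined with Lemma~\ref{3.14} applied with $a = 1$ and $n = c$. The other three matrices require the explicit coefficient check.
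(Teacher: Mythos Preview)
The paper states this lemma without proof, so there is nothing to compare against; your proposal stands on its own and is correct.

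Your handling of the parity and divisibility conditions is clean and works exactly as you describe. For the density condition, the claim that it suffices to check that all six coefficients of the quadratic form $4rs - t(r+s)$ in $a,b,c$ are nonnegative is a sufficient (not necessary) criterion, but it does in fact hold for all four matrices: for instance, matrix~2 gives $30a^{2}+20c^{2}+20ab+150ac+20bc$, matrix~3 gives $20a^{2}+20b^{2}+400c^{2}+50ab+270ac+460bc$, and matrix~4 gives $20a^{2}+20b^{2}+10c^{2}+50ab+70ac+70bc$. So the plan goes through. Your shortcut for matrix~1 via Equation~(\ref{eq 1}) and Lemma~\ref{3.14} is also valid and saves a little arithmetic.

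One small caveat worth flagging in the write-up: the paper's third necessary condition is the single inequality $t \le 4rs/(r+s)$, stated without the convention $r\le s\le t$. Your proof verifies exactly that inequality, which is what the lemma literally asserts. If one instead reads ``the necessary conditions'' as all three symmetric inequalities (equivalently $c_{1},c_{2},c_{3}\ge 0$), the same coefficient check works for those as well, but you should say so explicitly rather than leave it implicit.
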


	It is noted that the determinant of all matrices in the above Lemma are multiples of 100, but the Lemma is not correct for any matrix $D$ where $100 \ | \det(D).$\\
	The following Theorem shows that most acceptable triplets $(r,s,t)$ are obtained from $(1,3,3)$ and $(2,2,4)$, together with different multiples of 10 added to each part. 
	\begin{theorem}
		Let the graph $K_{r,s,t}$ satisfy the necessary conditions. 
		Then there exist positive integers $m,n, p_{1}, p_{2},$ and $p_{3}$ such that,
		$$r=m+4n+10p_{1}$$
		$$s=3m+2n+10p_{2}$$
		$$t=3m+2n+10p_{3}.$$
	\end{theorem}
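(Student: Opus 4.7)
The natural plan is to reduce existence of the decomposition to a finite congruence check modulo $10$ followed by a positivity argument. I would begin by observing that any representation of the claimed form forces two compatibility conditions on the triplet: $s \equiv t \pmod{10}$ (since $s - t = 10(p_2 - p_3)$), and $s \equiv 3r \pmod{10}$ (since $3(m + 4n) = 3m + 12n \equiv 3m + 2n \pmod{10}$). So the strategy is: (i) show that, after relabelling the parts, a triplet satisfying the necessary conditions meets these two congruences; (ii) solve for $m, n$ modulo $10$; (iii) lift to positive integer solutions with $p_i \ge 1$.

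For step (i) I would invoke Corollary \ref{mod10} together with the parity hypothesis, which restrict $(r, s, t) \pmod{10}$ to a short explicit list, split by parity into all-even and all-odd subcases (the nine odd residue classes listed after Corollary \ref{mod10} being one half of the bookkeeping). For each residue class I would choose the permutation of the parts that places two entries agreeing mod $10$ in the positions of $s$ and $t$, and then verify $s \equiv 3r \pmod{10}$ by direct inspection. For step (ii) the $2 \times 2$ system $m + 4n \equiv r,\ 3m + 2n \equiv s \pmod{10}$ has coefficient determinant $-10$, so it has rank $1$ mod $10$ with compatibility relation exactly $s \equiv 3r \pmod{10}$; given that compatibility, I can extract a small positive solution $m, n \in \{1, \dots, 10\}$ from each one-parameter family by inspection.

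For step (iii), once $m, n$ are fixed, setting $p_1 = (r - m - 4n)/10$ and analogously for $p_2, p_3$ yields integers by construction, and they are positive as soon as $r \ge m + 4n + 10$ and $s, t \ge 3m + 2n + 10$, which holds automatically once the parts exceed a bounded threshold; smaller triplets can be absorbed by re-choosing the mod-$10$ representatives $m, n$ in a larger residue window. The main obstacle is the bookkeeping in step (i): for each admissible residue class one must produce a single labelling of the parts consistent with both $s \equiv t \pmod{10}$ and $s \equiv 3r \pmod{10}$ simultaneously, and this is the step where the interaction between the mod-$5$ condition $5 \mid rs + rt + st$ and the parity condition is used in an essential way. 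Once that case analysis is organised into a table, steps (ii) and (iii) are essentially mechanical.
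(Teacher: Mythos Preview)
Your approach is essentially the paper's: reduce to a case analysis on $\{r,s,t\}\bmod 10$ via Corollary~\ref{mod10}, and for each class exhibit $m,n$ with $m+4n\equiv r$ and $3m+2n\equiv s\equiv t\pmod{10}$. The paper carries this out only for the eight residue classes coprime to $5$ (the four odd classes $\{1,3,3\},\{1,1,7\},\{3,9,9\},\{7,7,9\}$ and the four even classes $\{2,2,4\},\{2,6,6\},\{6,8,8\},\{4,4,8\}$), and your explicit derivation of the compatibility condition $s\equiv 3r\pmod{10}$ is a cleaner way to organise exactly that check.

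There is, however, a genuine obstruction in your step~(i) that the paper's proof silently passes over. When two of the parts are $\equiv 0\pmod 5$ --- the classes $\{1,5,5\},\{3,5,5\},\{7,5,5\},\{9,5,5\}$ in the odd case and $\{0,0,2\},\{0,0,4\},\{0,0,6\},\{0,0,8\}$ in the even case --- the only relabelling with $s\equiv t\pmod{10}$ places the two multiples of $5$ in the $s,t$ positions, and then $s\equiv 3r\pmod{10}$ would force $3r\equiv 5$ (resp.\ $3r\equiv 0$) $\pmod{10}$, which fails for $r\in\{1,3,7,9\}$ (resp.\ $r\in\{2,4,6,8\}$). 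So no permutation rescues these classes; the representation simply does not exist for them, and your ``direct inspection'' would uncover this rather than verify it. The paper's proof handles only the eight coprime-to-$5$ classes and omits the rest without comment. A smaller point: your step~(iii) fix for small triplets (``re-choosing $m,n$ in a larger residue window'') runs the wrong direction --- enlarging $m,n$ decreases the $p_i$ --- so strict positivity of $p_1,p_2,p_3$ genuinely fails for very small parts (e.g.\ $r=1$); the paper does not address positivity at all.
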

	\begin{proof}
		By Corollary \ref{mod10}, we deduce $\{r,s,t\}$ is equal to one of the followings $\Mod{10}$.
		\begin{itemize}
			\item $r,s, \text{ and } t \text{ are odd and coprime with 5: } \{1,1,7\}, \{1,3,3\}, \{3,9,9\}, \{7,7,9\}$ 
			\item $ r,s, \text{ and } t \text{ are odd and two of them divide 5: } \{1,5,5\}, \{3,5,5\}, \{5,5,5\}, \{7,5,5\}, \{9,5,5\}$ 
			\item $ r,s, \text{ and } t \text{ are even and coprime with 5: } \{2,2,4\}, \{2,6,6\}, \{6,8,8\}, \{4,4,8\}$ 
			\item $ r,s, \text{ and } t \text{ are even and two of them divide 5: } \{0,0,0\}, \{0,0,2\}, \{0,0,4\}, \{0,0,6\}, \{0,0,8\}$ 
		\end{itemize}
		The assertion follows easily for $\{1,3,3\}, \{3,9,9\},$ and $ \{7,7,9\}$. Also, note that if $\{r,s,t\}\equiv \{1,1,7\}\,\, \Mod{10}$, then it is also equal to  $\{11,11,7\}$, where we get $m=3$ and $n=1$.\\
		For $\{2,2,4\}, \{2,6,6\}, \{6,8,8\},$ and $ \{4,4,8\}$ the assertion is immediate.\\
	\end{proof}
	Furthermore, the procedure of combining small decomposable graphs to obtain larger ones is made easier through the following Theorem.
	\begin{theorem}[Mixing Theorem] \label{thm 2.6}
		Let $(r_{i},s_{i},t_{i})$ be triplets for $i=1,\ldots,n$. Assume there exists a Latin square of size $n$ with label $k$ in $(i,j)^{th}$ entry. If $K_{r_{i},s_{j},t_{k}}$ can be decomposed into $5$-cycles for all $1\leq i,j \leq n$, then $K_{r_{1}+\cdots+r_{n}, \ s_{1}+\cdots+s_{n}, \ t_{1}+\cdots+t_{n}}$ has a $5$-cycle decomposition.
	\end{theorem}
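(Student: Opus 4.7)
The plan is to realize the claimed $5$-cycle decomposition by gluing together the assumed decompositions of the smaller graphs $K_{r_i, s_j, t_k}$, where the role of the Latin square is to tell us \emph{which} such smaller graphs to take so that their edge sets partition the edges of the big graph.

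Concretely, let $R$, $S$, $T$ be the three parts of $K_{r_1+\cdots+r_n,\, s_1+\cdots+s_n,\, t_1+\cdots+t_n}$. First I would partition
\[
R = R_1 \sqcup \cdots \sqcup R_n,\qquad S = S_1 \sqcup \cdots \sqcup S_n, \qquad T = T_1 \sqcup \cdots \sqcup T_n,
\]
with $|R_i|=r_i$, $|S_j|=s_j$, $|T_k|=t_k$. Denote the given Latin square by $L$, so $L(i,j)=k$ is its $(i,j)$-entry. Then, for every ordered pair $(i,j) \in \{1,\ldots,n\}^2$, I would form the subgraph $H_{i,j}$ induced on $R_i \cup S_j \cup T_{L(i,j)}$. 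By definition each $H_{i,j}$ is a complete tripartite graph $K_{r_i, s_j, t_{L(i,j)}}$, which by hypothesis admits a $5$-cycle decomposition.

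The core step is then the verification that the $n^2$ subgraphs $\{H_{i,j}\}$ together cover every edge of the big graph exactly once; once this is done, stacking the assumed $5$-cycle decompositions of the $H_{i,j}$ produces the required decomposition. I would check the three classes of edges separately. An edge between $R_i$ and $S_j$ lies only in $H_{i,j}$, and is covered exactly once because the pair $(i,j)$ indexes exactly one subgraph. An edge between $R_i$ and $T_k$ lies in $H_{i,j}$ iff $L(i,j)=k$; the Latin-square row property (each symbol occurs exactly once in row $i$) gives a unique such $j$. Symmetrically, an edge between $S_j$ and $T_k$ lies in $H_{i,j}$ iff $L(i,j)=k$, and the column property of $L$ supplies a unique such $i$. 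Hence the $H_{i,j}$ are pairwise edge-disjoint and their union is all of $K_{r_1+\cdots+r_n,\, s_1+\cdots+s_n,\, t_1+\cdots+t_n}$.

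There is no real obstacle here: the argument is a straightforward bookkeeping exercise, and the only conceptual point is choosing which index of each triplet gets permuted by rows, which by columns, and which by the Latin-square entry. The Latin-square hypothesis is being used precisely because it is the unique combinatorial object that makes the three projections of a relation on $[n]^2 \to [n]$ all be bijections, which is exactly what is needed to guarantee that the $R$-$T$ and $S$-$T$ edges are each covered exactly once while the $R$-$S$ edges are automatically handled by the index $(i,j)$ itself.
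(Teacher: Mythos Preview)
Your proof is correct and follows essentially the same approach as the paper's: partition each part into blocks, use the Latin square to select which $t$-block accompanies each $(R_i,S_j)$ pair, and combine the assumed $5$-cycle decompositions of the resulting $K_{r_i,s_j,t_k}$. Your version is in fact more explicit than the paper's, since you spell out how the row and column properties of the Latin square guarantee that the $R$--$T$ and $S$--$T$ edges are each covered exactly once, whereas the paper simply asserts that the edges decompose.
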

	\begin{proof}
		The cell $(i,j)$ of the mentioned Latin square corresponds to the subgraph $K_{r_{i},s_{j},t_{k}}$ of the main graph.
		Therefore, edges of $K_{r_{1}+\cdots +r_{n}, s_{1}+\cdots+s_{n}, t_{1}+\cdots+t_{n}}$ decompose into the edges of $K_{r_{i},s_{j},t_{k}}$, when $1\leq i,j \leq n$ and $k$ is the $(i,j)^{th}$ entry of the Latin square. Since all $K_{r_{i},s_{j},t_{k}}$ are decomposable, one gets a decomposition of $K_{r_{1}+\cdots +r_{n}, s_{1}+\cdots+s_{n}, t_{1}+\cdots+t_{n}}$ by combining 5-cycles.
	\end{proof}
	\begin{example}
		If $K_{r_{1},s_{1},t_{1}}$, $K_{r_{1},s_{2},t_{2}}$, $K_{r_{2},s_{1},t_{2}}$, and $K_{r_{2},s_{2},t_{1}}$ are decomposable into $5$-cycles, then $K_{r_{1}+r_{2}, s_{1}+s_{2}, t_{1}+t_{2}}$ has a $5$-cycle decomposition, since the labels are extracted from a $2 \times 2$ Latin Square (Figure \ref{fig:6}).\\ 
		\begin{figure}[H]
		\centering
			\begin{subfigure}{0.31\textwidth}	
				\includegraphics[width=\linewidth]{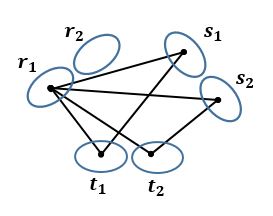}
				\caption{The subgraphs $K_{r_{1},s_{1},t_{1}}$ and $K_{r_{1},s_{2},t_{2}}$} \label{fig:6a}
			\end{subfigure}
			\begin{subfigure}{0.31\textwidth}
				\includegraphics[width=\linewidth]{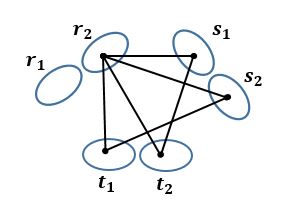}
				\caption{The subgraphs $K_{r_{2},s_{1},t_{2}}$ and $K_{r_{2},s_{2},t_{1}}$} \label{fig:6b}
			\end{subfigure}
			\begin{subfigure}{0.21\textwidth}
				\includegraphics[width=\linewidth]{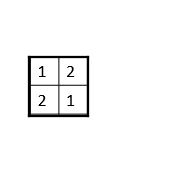}
				\caption{The Latin square of labels} \label{fig:6c}
			\end{subfigure}
			\caption{Decomposition of $K_{r_{1}+r_{2}, s_{1}+s_{2}, t_{1}+t_{2}}$ into copies of $K_{r_{1},s_{1},t_{1}}$, $K_{r_{1},s_{2},t_{2}}$, $K_{r_{2},s_{1},t_{2}}$, and $K_{r_{2},s_{2},t_{1}}$ using a $2 \times 2$ Latin square.} \label{fig:6}
		\end{figure}
	\end{example}
	%
	\section{Graph Adjacency Matrix (GAM) method}
	In order to decompose complete tripartite graphs into 5-cycles using the method in \cite{12,4,6,7,8} it is required to find an appropriate labeling of a Latin square and then try to cover it with irregular trades, specific subsets of edges and triangles that can be decomposed into $5$-cycles, 
	considering their labels and positions. When the graph parts are even, this method has a lengthy proof. Furthermore, Tackling the graphs with odd parts using this method is a complicated problem. 
	
	In this section, we use the graph adjacency matrix to decompose tripartite graphs into 5-cycles. Entries of the adjacency matrix represent edges of the graph so, decomposing the edges of a graph is equivalent to partitioning the adjacency matrix to specific patterns. We only deal with cell positions rather than handling labels and positions at the same time. This way, we transform the problem of cycle decomposition into covering sub-matrices with polygons.
	
	Let $D$ be the adjacency matrix of the complete tripartite graph $K_{r,s,t}$. Then $D$ has three zero-blocks of size $r$, $s$, and $t$ on its diagonal and all the other entries of $D$ are one as shown in Figure \ref{fig 7}. Moreover, the adjacency matrix $D$ is symmetric; each edge (e.g. $r_{i},s_{j}$) corresponds to two nonzero entries (e.g. $D(i, r+j)$ and $D(r+j, i)$). So we only consider the nonzero entries above the main diagonal as a representation of the whole graph.
		\begin{figure}[H]
			\centering
			\includegraphics[width=5.5cm]{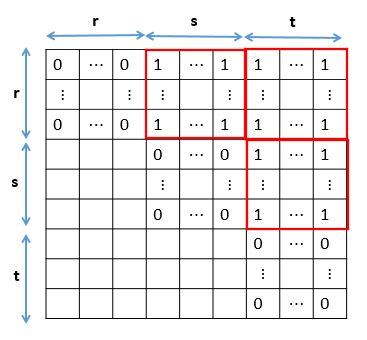}	
			\captionof{figure}{The adjacancy matrix $D$ of the graph $K_{r,s,t}$ with three blocks of zero on its diagonal and with other entries equal to one}\label{fig 7}
		\end{figure}
	\begin{figure}[H]
	\centering
		\begin{subfigure}{0.3\textwidth}
			\includegraphics[width=\linewidth]{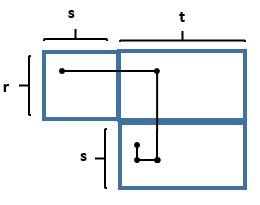}
			\caption{type 1} \label{fig:9a}
		\end{subfigure}
		\begin{subfigure}{0.3\textwidth}
			\includegraphics[width=\linewidth]{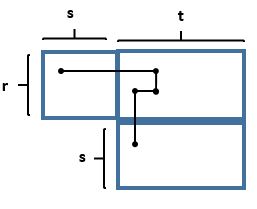}
			\caption{type 2} \label{fig:9b}
		\end{subfigure}
		\begin{subfigure}{0.3\textwidth}
			\includegraphics[width=\linewidth]{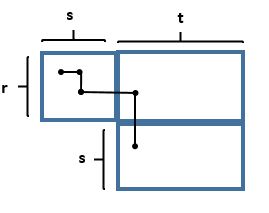}
			\caption{ type 3} \label{fig:9c}
		\end{subfigure}
		\caption{Representation of three types of 5-cycles in GAM method. Note that rows and columns of each rectangle are indexed independently starting from 1.} \label{fig:9}
	\end{figure}
	In the upper half of the adjacency matrix, a $5$-cycle corresponds to a path of length four (consisting of five end-points) and three right angles where the beginning column is the same as the final row (Figure \ref{fig:9}). In other words, the marked edges (shown with arrows) are identified as in Figure \ref{identify} and we have a pentagon with five right angles. This means we are looking for five edges connected in a cycle. Also, the pentagon must have three vertices in one part and one vertex in each of the remaining parts. Considering the part that vertices belong to, there exist three types of 5-cycles. In type 1, 2, and 3 of a 5-cycle, three vertices are located in the $s-t$, $t-r$, and $r-s$ sub-matrix, respectively.
	 We call the procedure of decomposing $K_{r,s,t}$ into 5-cycles by finding such pentagons in the adjacency matrix, as the \textbf{Graph Adjacency Matrix} (\textbf{GAM}) method.
	 By trying to cover the upper half of the adjacency matrix with polygons, we can decompose graphs into 5-cycles. We denote 5-cycles with different colors/ labels. When the graph adjacency matrix is colored thoroughly, all edges are part of a 5-cycle and the decomposition is complete.
	 %
		\begin{figure}[H]
			\centering
			\includegraphics[width=4.5cm]{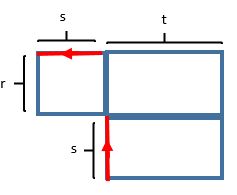}
			\includegraphics[width=4.5cm]{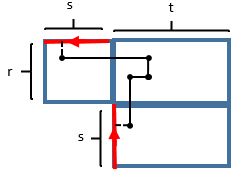}
			\captionof{figure}{The adjacency matrix of the graph $K_{r,s,t}$ and a 5-cycle in GAM method}
			\label{identify}
		\end{figure}
	\begin{example}
		Figure \ref{fig:5} shows decomposition of $K_{1,3,3}$ and $K_{2,2,4}$ using GAM method.
		\begin{figure}[H]
		\centering
			\begin{subfigure}{0.27\textwidth}
				\includegraphics[width=\linewidth]{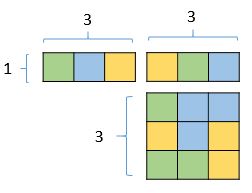}
				\caption{ $K_{1,3,3}$} \label{fig:5a}
			\end{subfigure}
			\begin{subfigure}{0.27\textwidth}
				\includegraphics[width=\linewidth]{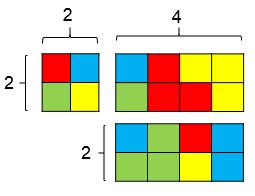}
				\caption{ $K_{2,2,4}$} \label{fig:5b}
			\end{subfigure}
			\caption{Decomposition of $K_{1,3,3}$ and $K_{2,2,4}$ into 5-cycles using GAM method. 5-cycles are denoted by cells of the same color.}
			\label{fig:5} 
		\end{figure}
	\end{example}
\begin{example}
	Figure \ref{fig matlab} uses GAM method to show how $K_{r,r,s}$ is decomposed into $5$-cycles as in \cite{3}. Note that in this case the number of cycles are $c_{1}=c_{2}=rn$, and $c_{3}=rm$ where $m$ and $n$ are calculated as in Equation \ref{eq 1}.
	\begin{figure}[H]
		\centering
		\includegraphics[width=5cm]{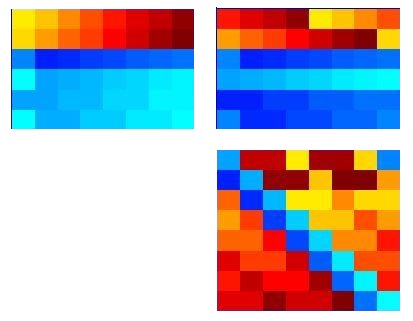} 
		\captionof{figure}{GAM representation of decomposition of $K_{8,8,6}$ using cycles of \cite{3}.} 
		\label{fig matlab}
	\end{figure}
\end{example}
	In subsequent, we will use GAM method to decompose more graphs into 5-cycles.
	\begin{theorem}
		Let $G=K_{r, s, t}$ satisfy the necessary conditions and $s=2r$. Also, assume $G$ has no cycle of type 3. Then $(r, s, t)$ is a multiple of $(6,12,16)$ and the $5$-cycles may be found explicitly.
	\end{theorem}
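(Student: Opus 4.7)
The plan is to pin down the triple $(r,s,t)$ from the two arithmetic constraints first, and then construct an explicit cycle decomposition for the smallest instance and extend it by Proposition~\ref{prop2.3}.

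\textbf{Step 1: use $c_{3}=0$ to solve for $t$.} By Equation~(\ref{prop 2.2}), the assumption that $G$ has no cycle of type~3 is precisely $-st-rt+4rs=0$, equivalently $t=\dfrac{4rs}{r+s}$. Substituting $s=2r$ gives $t=\dfrac{8r^{2}}{3r}=\dfrac{8r}{3}$, so $3\mid r$. Writing $r=3k$, we obtain $(r,s,t)=(3k,6k,8k)$.

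\textbf{Step 2: apply parity to force $k$ even.} The second necessary condition $r\equiv s\equiv t\pmod 2$ becomes $3k\equiv 6k\equiv 8k\pmod 2$, i.e.\ $k$ must be even. Writing $k=2m$ yields $(r,s,t)=(6m,12m,16m)$, which is a multiple of $(6,12,16)$ as claimed. A quick check confirms the divisibility condition $5\mid rs+rt+st=360m^{2}$ and the inequality $t\le 4rs/(r+s)$ (which holds with equality here), so no further constraints arise.

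\textbf{Step 3: reduce to the base case $K_{6,12,16}$.} By Proposition~\ref{prop2.3}, once $K_{6,12,16}$ is decomposed into $5$-cycles, $K_{6m,12m,16m}=K_{m\cdot 6,\,m\cdot 12,\,m\cdot 16}$ admits a $5$-cycle decomposition for every $m\in\mathbb{N}$. So it suffices to exhibit a decomposition of $K_{6,12,16}$. Using Equation~(\ref{prop 2.2}), this graph must be partitioned into exactly $c_{1}=60$ cycles of type~1 and $c_{2}=12$ cycles of type~2, with $c_{3}=0$ as expected.

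\textbf{Step 4: build the base decomposition with GAM.} In the adjacency matrix of $K_{6,12,16}$, we need to tile the $s\times t$ block (of size $12\times 16$) entirely with type-1 pentagons (three corners in this block) and use the $r\times t$ block (of size $6\times 16$) together with the $r\times s$ block (of size $6\times 12$) for the type-2 pentagons. Since $c_{3}=0$, the $r\times s$ block is covered only by the ``short arm'' of the type-1 and type-2 pentagons. I would look for a highly periodic tiling, for example grouping columns of the $s\times t$ block into repeating blocks of width $4$ (since $16=4\cdot 4$ and $12=3\cdot 4$) so that a single fundamental pattern is translated to cover the whole block; the $6$ rows of the $r$-part can then be handled in two identical halves of $3$ rows each, echoing the $(1,3,3)$ base pattern illustrated in Figure~\ref{fig:5a}. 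The explicit pentagons are then recorded by listing, for each of the $72$ cycles, the four corner cells in the upper half of the adjacency matrix.

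The main obstacle is clearly Step~4: while Steps 1--3 are just arithmetic, producing an explicit legal tiling by $60+12$ GAM-pentagons requires either a clever algebraic description of the cycle positions (probably via arithmetic progressions modulo $6$, $12$, and $16$) or a hand-verified construction displayed as a coloured $34\times 34$ adjacency matrix in the style of Figure~\ref{fig matlab}. The parameter sizes are small enough that the construction can reasonably be exhibited directly, which is why the theorem claims the cycles ``may be found explicitly''.
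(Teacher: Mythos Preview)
Your argument is essentially the same as the paper's: both solve $c_{3}=0$ with $s=2r$ to get $t=8r/3$, invoke parity to force $r$ an even multiple of $3$, and then reduce to the single base case $K_{6,12,16}$, whose explicit GAM decomposition is relegated to an appendix. The only cosmetic difference is that you extend to $K_{6m,12m,16m}$ via Proposition~\ref{prop2.3}, whereas the paper simply says the same GAM strategy applies for all $k$; both are valid and routine.
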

	\begin{proof}
		Using Proposition \ref{prop 2.2}, the number of cycles of type 3 is $\frac{(-st-rt+4rs)}{10}$ and $s=2r$ so we have 
		$$\frac{(-2rt-rt+4r(2r))}{10}=0 \ \  \Longrightarrow   \ \ -3rt+8r^{2}=0 \,\,\, \Longrightarrow \,\,\, t=\frac{8}{3}r. $$ Now, since 
		$(r,s,t)=(r,2r,\frac{8}{3}r)$, $r$ must be an even multiple of 3 and hence $(r,s,t)=(6k,12k,16k)$ for some $k \in \mathbb{N}$.
		In \ref{appendix A},
		one sees the 5-cycle decomposition of $(6,12,16)$ in GAM representation. It is immediate to find decomposition of all $(6k,12k,16k)$ using the same strategy.
	\end{proof}

	\begin{proposition}
		$K_{10k,12k,20k}$ has a decomposition into $5$-cycles.
	\end{proposition}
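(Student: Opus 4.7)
The plan is to reduce to the base case $k=1$ via Proposition~\ref{prop2.3}: once $K_{10,12,20}$ has been decomposed into $5$-cycles, the decomposition of $K_{10k,12k,20k}$ follows for every $k\in\mathbb{N}$ by tiling the larger graph with $k^{2}$ translated copies of the base pattern (exactly as was used in the preceding theorem to pass from $K_{6,12,16}$ to $K_{6k,12k,16k}$). The work therefore concentrates entirely on producing a $5$-cycle decomposition of $K_{10,12,20}$ itself.

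As a preliminary sanity check I would verify the three necessary conditions for $(r,s,t)=(10,12,20)$: all three parts are even, $rs+rt+st=120+200+240=560$ is a multiple of $5$, and $t=20\le 480/22=4rs/(r+s)$. Substituting into Equation~(\ref{prop 2.2}) then pins down the type distribution of the target decomposition: $c_{1}=64$, $c_{2}=44$, $c_{3}=4$. Any candidate tiling must use exactly this many pentagons of each type, for a total of $112$ cycles and $560$ covered edges.

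The main step is to exhibit such a tiling of the upper triangle of the adjacency matrix, in the same spirit as the construction used for $K_{6,12,16}$. I would build the decomposition blockwise over the three rectangular panels corresponding to the $r$-$s$, $r$-$t$ and $s$-$t$ parts of the graph: first place the small number ($c_{3}=4$) of type~$3$ pentagons inside the narrow $10\times 12$ ($r$-$s$) panel, and then populate the $10\times 20$ and $12\times 20$ panels with pentagons of types~$2$ and~$1$ respectively, making sure that every entry is covered exactly once and that the starting column and final row of each pentagon coincide as in Figure~\ref{identify}. The resulting colored GAM picture is what would be presented (most likely relegated to an appendix, mirroring the appendix supporting the preceding theorem), after which the general case $K_{10k,12k,20k}$ follows immediately from Proposition~\ref{prop2.3}.

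The main obstacle is not theoretical but purely combinatorial: coordinating the three panels so that the pentagons in adjacent panels share endpoints correctly, subject to the prescribed counts $(c_{1},c_{2},c_{3})=(64,44,4)$. The scarcity of type-$3$ pentagons is the feature that makes the bookkeeping awkward, and some trial and error must be expected before a clean tiling emerges. Should a direct construction prove unwieldy, a fallback is to invoke the Mixing Theorem~\ref{thm 2.6} with a small Latin square, expressing $(10,12,20)$ as a sum of triplets all of whose cross-combinations are already known to be decomposable into $5$-cycles (for instance combinations of $K_{2,2,4}$ with a suitable $K_{r,r,s}$-type block), thereby assembling the desired decomposition from pieces in hand.
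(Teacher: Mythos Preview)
Your proposal is correct and follows essentially the same approach as the paper: exhibit an explicit GAM tiling of $K_{10,12,20}$ (which the paper relegates to Appendix~\ref{appendix B}), then invoke Proposition~\ref{prop2.3} to pass to $K_{10k,12k,20k}$. Your additional sanity checks on the necessary conditions and the type counts $(c_{1},c_{2},c_{3})=(64,44,4)$ are accurate and helpful, though the paper omits them.
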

	\begin{proof}
		This graph is decomposed using GAM method. The 5-cycles of $K_{10,12,20}$ are illustrated in \ref{appendix B} and hence all $K_{10k,12k,20k}$ are decomposable. 
	\end{proof}
	\section{Decomposition of graphs in more cases}
	Hereafter, we will use GAM method to decompose some graphs with odd parts. For example,  $K_{9,13,19}$ and $K_{11,13,23}$ are the smallest graphs for which no decomposition were known before. We find decompositions for these and some more graphs with odd parts.
	\begin{proposition} \label{9-13-19}
		$\bm{K_{9,13,19}}$ and $\bm{K_{11,13,23}}$ and their multiples have decompositions into 5-cycles.
	\end{proposition}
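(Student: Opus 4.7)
The plan is to produce explicit 5-cycle decompositions of $K_{9,13,19}$ and $K_{11,13,23}$ via the GAM method, and then invoke Proposition \ref{prop2.3} to lift each base decomposition to every multiple $K_{9n,13n,19n}$ and $K_{11n,13n,23n}$. Thus the entire proposition reduces to producing two pentagon-tilings of the upper-triangular part of the relevant adjacency matrix.

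To orient the construction, I would first read off from Equation (\ref{prop 2.2}) exactly how many 5-cycles of each type must appear. For $(r,s,t)=(9,13,19)$ this gives $(c_1,c_2,c_3)=(70,32,5)$, totalling $107=535/5$, and for $(11,13,23)$ it gives $(80,57,2)$, totalling $139=695/5$. The striking feature is that type-3 pentagons are very scarce (only $5$ and $2$ respectively), while type-1 dominates. I would therefore place the type-3 pentagons first, confining them to a compact corner of the $r$--$s$ block so that their legs protruding into the $r$--$t$ and $s$--$t$ blocks occupy contiguous rows and columns; this reduces the remaining problem to tiling an almost-rectangular region with just type-1 and type-2 pentagons.

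For the bulk of the cover I would exploit the Mixing Theorem (Theorem \ref{thm 2.6}) to assemble the large region from already-decomposed atomic tripartite graphs. Natural atoms include $K_{1,3,3}$, $K_{2,2,4}$, the decomposable graphs $K_{r,r,s}$ of \cite{3}, and the previously known decompositions of $K_{11,15,25}$, $K_{13,15,25}$, $K_{17,15,25}$, $K_{19,15,25}$, and $K_{7,17,19}$. Whenever $(9,13,19)$ or $(11,13,23)$ can be written as $\sum_i(r_i,s_i,t_i)$ of such atoms compatibly with a Latin square of labels, one obtains a ready-made partition of a large portion of the adjacency matrix; the rest is then filled by hand with type-1 and type-2 pentagons, in the style of Figure \ref{fig matlab}.

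The main obstacle will be this residual region. Neither $(9,13,19)$ nor $(11,13,23)$ admits a completely clean Mixing-Theorem assembly by the known atoms—the arithmetic modulo $2$ and modulo $10$ leaves a thin irregular strip whose dimensions are forced by the required counts $(c_1,c_2,c_3)$. Fitting the last few type-3 pentagons, each of whose three right angles must be reconciled with the already-placed covers of the $r$--$t$ and $s$--$t$ blocks, is the delicate combinatorial step, since a misplacement of even one type-3 pentagon leaves an uncoverable hole in a neighbouring block. Once the two pentagon-tilings are exhibited explicitly (presumably in an appendix), Proposition \ref{prop2.3} immediately yields decompositions of every multiple, completing the proof.
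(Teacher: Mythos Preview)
Your high-level plan---produce explicit GAM pentagon-tilings for the two base graphs and then invoke Proposition~\ref{prop2.3} for the multiples---is exactly what the paper does: its entire proof is a reference to the explicit tilings displayed in Appendices~\ref{appendix C} and~\ref{appendix D}. Your cycle-count computations $(c_1,c_2,c_3)=(70,32,5)$ and $(80,57,2)$ are also correct and are a sensible way to orient the search.

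Where your outline diverges is in the construction strategy. You propose covering ``the bulk'' of the adjacency matrix via the Mixing Theorem and then hand-filling a residual strip, but the Mixing Theorem is not a partial tool: it either decomposes the entire $K_{r,s,t}$ (when the Latin-square condition is met by decomposable atoms) or it says nothing. There is no mechanism by which it covers ``a large portion'' and leaves a well-defined remainder; once you carve out a subgraph $K_{r_1,s_1,t_1}$, the complementary edge set is not tripartite and does not interact nicely with further atoms. You yourself note that neither triple admits a clean Mixing-Theorem assembly, so this detour does no real work.

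The paper's actual construction (described in the ``Strategy'' paragraph following this proposition and illustrated in the appendix figures) is instead a direct, layered placement: work in the GAM picture for $K_{s,r,t}$, lay down type-1 cycles column-by-column via the explicit formula $(r_i,s_j,t_{j+3(i-1)},s_{j+2},t_{j+3(i-1)+1})$ until four columns remain, place type-2 cycles along diagonals, insert the few type-3 cycles in the last four columns using Lemma~\ref{lemma s}, and finish the remainder by trial. So your endpoint (an explicit tiling in an appendix) matches the paper, but the route you sketch to get there is not the one the paper takes and, as written, would not produce a partial cover to be completed.
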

	\begin{proof}
		These graphs are decomposed using GAM method. The pictures in \ref{appendix C} and \ref{appendix D} show these decompositions in several steps.\\
	\end{proof}
	\begin{theorem}
		Let $K_{r,s,t}$ with $r \leq s \leq t$ be decomposable into $5$-cycles and $p$ be a multiple of $5$ with $p \equiv r \Mod{2}$
		and $t$ satisfying $\frac{t}{2} \leq p \leq 3r$. Then $K_{r+2p,s+2p,t+2p}$ also has a decomposition into $5$-cycles.
	\end{theorem}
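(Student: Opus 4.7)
The plan is to apply the Mixing Theorem (Theorem~\ref{thm 2.6}) with $n=3$, using the cyclic Latin square of order $3$ and the three triplets $(r_1, s_1, t_1) = (r, s, t)$ and $(r_2, s_2, t_2) = (r_3, s_3, t_3) = (p, p, p)$. These sum coordinatewise to $(r+2p,\ s+2p,\ t+2p)$, so a successful application of the Mixing Theorem immediately produces the desired decomposition.

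With this setup, the nine sub-blocks $K_{r_i, s_j, t_{L(i,j)}}$ appearing in the Mixing Theorem consist of a single copy of $K_{r, s, t}$ together with two copies each of $K_{r, p, p}$, $K_{p, s, p}$, $K_{p, p, t}$, and $K_{p, p, p}$. The first step is to verify that each of these is decomposable into $5$-cycles. The graph $K_{r,s,t}$ is decomposable by hypothesis; $K_{p,p,p}$ is an integer multiple of $K_{5,5,5}$ (since $5 \mid p$) and so is decomposable by Proposition~\ref{prop2.3}; and each of the three \emph{mixed} graphs has two equal parts, so it falls under the Mahmoodian--Mirzakhani / Cavenagh result which asserts that $K_{x,x,y}$ is decomposable whenever the necessary conditions hold.

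The main obstacle is thus to verify the necessary conditions for $K_{a, p, p}$ with $a \in \{r, s, t\}$. The divisibility condition $5 \mid p^2 + 2ap$ is automatic because $5 \mid p$, and the parity condition $a \equiv p \pmod{2}$ follows from the hypothesis $p \equiv r \pmod{2}$ combined with $r \equiv s \equiv t \pmod{2}$ (itself a necessary condition already satisfied by $K_{r,s,t}$). The remaining inequality condition on the largest part splits into two cases: when $a \leq p$, it reduces to $p \leq 3a$, which holds because $a \geq r \geq p/3$; when $a > p$, it becomes $a \leq 2p$, which holds because $a \leq t \leq 2p$. This is exactly where the two hypotheses $t/2 \leq p$ and $p \leq 3r$ are used, each controlling one of the two cases. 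Once all sub-blocks are shown decomposable, the Mixing Theorem yields a $5$-cycle decomposition of $K_{r+2p,\ s+2p,\ t+2p}$.
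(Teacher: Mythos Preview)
Your proof is correct and follows essentially the same approach as the paper's: both apply the Mixing Theorem with $n=3$ to the triplets $(r,s,t)$, $(p,p,p)$, $(p,p,p)$, reducing the problem to the decomposability of $K_{r,s,t}$, $K_{p,p,p}$, and $K_{a,p,p}$ for $a\in\{r,s,t\}$, with the latter handled by the $K_{x,x,y}$ result of \cite{3,4} once the necessary conditions are verified from the hypotheses $t/2\le p\le 3r$ and $p\equiv r\pmod 2$. Your case split on $a\le p$ versus $a>p$ makes the role of the two bounds slightly more explicit than in the paper, but the argument is otherwise identical.
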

	\begin{proof}
		Here $(r,p,p)$ satisfies the necessary conditions  whenever
		$$p \leq \dfrac{4rp}{r+p}  \ and  \ r \leq \dfrac{4p^{2}}{2p} \ \ \Longleftrightarrow \ \  \frac{r}{2} \leq p \leq 3r $$
		Hence, by replacing $r,s,$ and $t$ in the above argument one sees that $k_{r,p,p}$, $k_{s,p,p}$, $k_{t,p,p}$ satisfy the necessary conditions  whenever
		$$\frac{r}{2} \leq p \leq 3r , \ \ \frac{s}{2} \leq p \leq 3s, \ \ \frac{t}{2} \leq p \leq 3t $$
		which is satisfied by the hypothesis. Consequently, they have decompositions into $5$-cycles \cite{4}.
		Consider the following combination:
		\begin{equation}
		\begin{aligned}
		(r, s, t)+ (p,p,p)+ (p,p,p) =(r+2p,\ s+2p,\ t+2p)
		\end{aligned}
		\end{equation}
		As $k_{r,s,t},\ k_{r,p,p}, \ k_{s,p,p}, \ k_{t,p,p}, \ $ and $k_{p,p,p}$ all have decompositions into $5$-cycles, we are able to use Mixing Theorem \ref{thm 2.6} and conclude that $K_{r+2p,s+2p,t+2p}$ has also a decomposition into $5$-cycles (Figure \ref{fig:12}).
		\begin{figure}[H]
			\begin{floatrow}
				\ffigbox{%
					\includegraphics[width=0.6\linewidth]{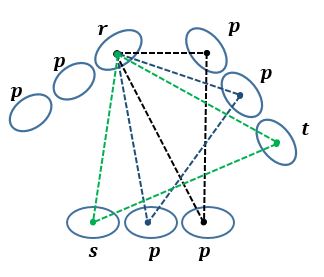}%
				}{%
					\caption{$K_{r+2p,s+2p,t+2p}$ is decomposed into copies of decomposable graphs by subgraphs obtained from Table \ref{table 2}. Then using Mixing Theorem \ref{thm 2.6} we get a 5-cycle decomposition of $K_{r+2p,s+2p,t+2p}$.}\label{fig:12}
				}
				\capbtabbox{%
					\begin{tabular}{c|c c c} 
						\textbf{ } & \textbf{s} & \textbf{p} & \textbf{p}\\
						\hline
						\textbf{r} & t & p & p\\
						\textbf{p} & p & t & p\\
						\textbf{p} & p & p & t\\
					\end{tabular}
				}{%
					\caption{Latin square of the Mixing Theorem \ref{thm 2.6}.}\label{table 2}
				}
			\end{floatrow}
		\end{figure}
		
	\end{proof}
	\begin{corollary}
		In the above circumstances, we can add as many components of $(p,p,p)$ as we want and prove that $K_{r+np,s+np,t+np}$ has a $5$-cycle decomposition for all $n \in \mathbb{N} $.
	\end{corollary}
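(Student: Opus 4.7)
The plan is to extend the Mixing Theorem argument from the preceding theorem by replacing its $3\times 3$ Latin square with an $(n+1)\times(n+1)$ one. The intuition is that adding $n$ copies of $(p,p,p)$ partitions each of the three parts of $K_{r+np,\,s+np,\,t+np}$ into one large block (of size $r$, $s$, or $t$) and $n$ further blocks of size $p$; an $(n+1)\times(n+1)$ Latin square then packages these partitions into a decomposition into smaller, already-decomposable tripartite subgraphs.

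Concretely, I would take $r_1=r$, $s_1=s$, $t_1=t$, and $r_i=s_i=t_i=p$ for $2\le i\le n+1$, and choose the cyclic Latin square $L(i,j)=((i+j-2)\bmod (n+1))+1$, which satisfies $L(1,1)=1$. Applying the Mixing Theorem (Theorem \ref{thm 2.6}) reduces the problem to checking that $K_{r_i,\,s_j,\,t_{L(i,j)}}$ is decomposable at every cell. Because label $1$ appears exactly once in each row and column of a Latin square, the choice $L(1,1)=1$ forces $(1,1)$ to be the only cell in row $1$ or column $1$ that uses $K_{r,s,t}$: cells $(1,j)$ with $j\ge 2$ use $K_{r,p,p}$, cells $(i,1)$ with $i\ge 2$ use $K_{p,s,p}$, interior cells carrying label $1$ use $K_{p,p,t}$, and the remaining interior cells use $K_{p,p,p}$.

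The next step is to verify decomposability of each type. $K_{r,s,t}$ is given; $K_{r,p,p}$, $K_{s,p,p}$, and $K_{t,p,p}$ were already shown decomposable inside the proof of the preceding theorem, using that $(r,p,p)$, $(s,p,p)$, $(t,p,p)$ all satisfy the necessary conditions under the hypotheses $\tfrac{t}{2}\le p\le 3r$, $p\equiv r\pmod 2$, $5\mid p$, and $r\le s\le t$, together with the result of \cite{4}; and $K_{p,p,p}$ is decomposable because $p$ is a multiple of $5$. The Mixing Theorem then closes the argument.

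The main point of care is the placement of label $1$: if label $1$ were allowed to occur in row $1$ or column $1$ away from the corner, the argument would demand decompositions of hybrid graphs such as $K_{r,s,p}$ that are not available. Pinning $L(1,1)=1$ removes this obstacle automatically, since each label occurs exactly once in each row and column of a Latin square. I do not anticipate any genuine difficulty beyond this combinatorial bookkeeping; note, as a sanity check, that the case $n=2$ collapses back to the $3\times 3$ construction of the preceding theorem.
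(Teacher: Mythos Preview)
Your proposal is correct and is precisely the argument the paper leaves implicit: the corollary is stated without proof, and the intended justification is simply to enlarge Table~\ref{table 2} from a $3\times 3$ to an $(n+1)\times(n+1)$ Latin square, exactly as you describe. Your care in pinning $L(1,1)=1$ so that the only ``mixed'' pieces appearing are $K_{r,s,t}$, $K_{r,p,p}$, $K_{p,s,p}$, $K_{p,p,t}$, and $K_{p,p,p}$ is the right observation, and all of these are already certified decomposable in the proof of the preceding theorem.
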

	\begin{example}
		$\bm{K_{37,47,49}}$ has a $5$-cycle decomposition since $(37,47,49)=(7+30, 17+30,19+30)$, and $K_{7,17,19}$ has a decomposition. 
		Also, $\bm{K_{39,43,49}}$ has a $5$-cycle decomposition since $(39,43,49)=(9+30, 13+30,19+30)$, and $K_{9,13,19}$ has a decomposition as shown in Proposition \ref{9-13-19}.
	\end{example}
	
	\begin{theorem}
		If $K_{a,b,c}$, $K_{5k,b,c}$, $K_{b,5k,5k}$, $K_{a,5k,5k}$, and $K_{c,5k,5k}$ are decomposed into $5$-cycles, then $K_{a+10k,2b+5k,2c+5k}$ can also be decomposed.
	\end{theorem}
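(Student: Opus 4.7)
The plan is to apply the Mixing Theorem (Theorem \ref{thm 2.6}) with $n = 3$. Given the shape of the target $(a+10k,\ 2b+5k,\ 2c+5k)$, the natural choice of triplets is
\[(r_1,s_1,t_1)=(a,b,c),\qquad (r_2,s_2,t_2)=(5k,b,c),\qquad (r_3,s_3,t_3)=(5k,5k,5k),\]
since these sum coordinate-wise to exactly $(a+10k,\ 2b+5k,\ 2c+5k)$, matching the desired graph.

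Next I would pair these triplets with the cyclic Latin square of order three,
\[L = \begin{pmatrix} 1 & 2 & 3 \\ 2 & 3 & 1 \\ 3 & 1 & 2 \end{pmatrix},\]
so that the Mixing Theorem requires each $K_{r_i,\,s_j,\,t_{L(i,j)}}$ to be $5$-cycle decomposable. The key observation is that, up to permutation of the three parts (to which $K_{x,y,z}$ is invariant), the nine cells collapse onto exactly the five families appearing in the hypothesis: $K_{a,b,c}$, $K_{5k,b,c}$, $K_{a,5k,5k}$, $K_{b,5k,5k}$, and $K_{c,5k,5k}$. For example, the cells where both $r_i$ and $t_{L(i,j)}$ equal $5k$ yield a graph of the form $K_{5k,\,s_j,\,5k}$, which is either $K_{b,5k,5k}$ or $K_{c,5k,5k}$; cells with $r_i=5k$ and $s_j=5k$ yield $K_{5k,5k,\,t_{L(i,j)}}$, one of $K_{a,5k,5k}$ or $K_{c,5k,5k}$; and the remaining cells reduce to either $K_{a,b,c}$ or $K_{5k,b,c}$. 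Writing out the nine cells is routine case-matching.

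Once the nine-cell verification is complete, Theorem \ref{thm 2.6} immediately yields the decomposition of $K_{a+10k,\,2b+5k,\,2c+5k}$ into $5$-cycles, and no further construction is needed. The main obstacle is purely the combinatorial bookkeeping of matching every cell of the Latin square with one of the five hypothesized decomposable graphs; the cyclic Latin square works precisely because isolating the all--$5k$ triplet as the third component forces every mixed cell to land on one of the prescribed forms. If the cyclic choice had failed, one would simply search over the (finitely many) Latin squares of order $3$ for a compatible labelling, so there is no essential difficulty beyond the one bookkeeping step.
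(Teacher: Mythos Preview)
Your proposal is correct and essentially identical to the paper's own proof: the paper uses the same three triplets $(a,b,c)$, $(5k,b,c)$, $(5k,5k,5k)$ and applies the Mixing Theorem via a $3\times 3$ Latin square (displayed there as Table~\ref{table 3}), reducing the nine required subgraphs to exactly the five hypothesized ones. Your informal description of which cells yield which $K_{x,y,z}$ is slightly off (for instance, a cell with $r_i=s_j=5k$ can never produce $K_{a,5k,5k}$ since $a$ is not among the $t$-values), but the actual nine-cell check you propose goes through cleanly with the cyclic square.
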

	\begin{proof}
		We have:
		\begin{equation}
			(a,b,c) + (5k, b, c) + (5k, 5k, 5k) = (a+10k,\ 2b+5k,\ 2c+5k)
		\end{equation}
		We divide $K_{a+10k,2b+5k,2c+5k}$ into smaller decomposable graphs using Table 
		\ref{table 3}. Then, by Mixing Theorem \ref{thm 2.6} the assertion follows.
			\begin{table}[h!]
			\begin{center}
				\caption{The Latin square of Mixing Theorem \ref{thm 2.6} for $K_{a+10k,2b+5k,2c+5k}$.}\label{table 3}
				\begin{tabular}{c|c c c} 
					\textbf{ } & \textbf{b} & \textbf{b} & \textbf{5k}\\
					\hline
					\textbf{c} & a & 5k & 5k\\
					\textbf{c} & 5k & a & 5k\\
					\textbf{5k} & 5k & 5k & a\\
				\end{tabular}
			\end{center}
		\end{table} 
	\end{proof}
	\begin{example}
		$\bm{K_{49,45,55}}$ admits a decomposition since $(45,49,55)=(15+30,2 \times 17+15,2 \times 25+15 )$ and $K_{15,17,25}$ is decomposed.
	\end{example}
	\begin{theorem}\label{5k}
		If $K_{a,b,5k}$, $K_{a,c,5k}$, $K_{a,5k,5k}$, $K_{b,5k,5k}$, and $K_{c,5k,5k}$ are decomposable into $5$-cycles, then $K_{a+10k,b+c+5k,15k}$ may also be decomposed.
	\end{theorem}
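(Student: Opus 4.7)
The plan is to mirror the strategy of the preceding theorem and apply the Mixing Theorem \ref{thm 2.6} with a $3\times 3$ Latin square. I begin from the identity
\[
(a,b,5k) + (5k,c,5k) + (5k,5k,5k) = (a+10k,\ b+c+5k,\ 15k),
\]
and partition the parts of $K_{a+10k,b+c+5k,15k}$ accordingly: the first part into blocks of sizes $a,5k,5k$, the second into blocks of sizes $b,c,5k$, and the third into three equal blocks of size $5k$. This furnishes the three triplets $(r_i,s_i,t_i)$ required by the Mixing Theorem.

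The helpful simplification is that all three third-coordinates are equal to $5k$, so every cell of any valid $3\times 3$ Latin square produces a subgraph of the form $K_{r_i,s_j,5k}$, independently of which labelling is chosen. Taking, for instance, the cyclic Latin square, I would tabulate the nine resulting cells; by the symmetry of complete tripartite graphs under permuting parts, they reduce to just six types, namely $K_{a,b,5k}$, $K_{a,c,5k}$, $K_{a,5k,5k}$, $K_{b,5k,5k}$, $K_{c,5k,5k}$, and $K_{5k,5k,5k}$. The first five are decomposable by hypothesis; the sixth, $K_{5k,5k,5k}$, is decomposable because $K_{5,5,5}$ is one of the classical decomposable triplets and Proposition \ref{prop2.3} lifts its decomposition to every multiple.

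With every cell of the Latin square corresponding to a decomposable complete tripartite graph, the Mixing Theorem \ref{thm 2.6} assembles the individual $5$-cycle decompositions into a decomposition of $K_{a+10k,b+c+5k,15k}$. I do not foresee any real obstacle: the only genuine bookkeeping is exhibiting a concrete Latin square analogous to Table \ref{table 3} of the preceding theorem, this time with two rows corresponding to the ``$5k$'' piece of the first part and one column to the ``$5k$'' piece of the second part, and then checking that its nine cells fall into the six types listed above. The conclusion follows at once.
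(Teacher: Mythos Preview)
Your proposal is correct and follows essentially the same approach as the paper: the same summation identity, the same $3\times 3$ Latin square setup (the paper's Table~\ref{table 33} has rows $a,5k,5k$, columns $b,c,5k$, and all entries $5k$), and the same appeal to the Mixing Theorem. If anything, you are slightly more careful than the paper in explicitly justifying the decomposability of $K_{5k,5k,5k}$ via Proposition~\ref{prop2.3}.
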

	\begin{proof}
		We have:
		\begin{equation}
		(a,b,5k) + (5k, c, 5k) + (5k, 5k, 5k) = (a+10k,\ b+c+5k,\ 15k)
		\end{equation}
		We combine complete tripartite graphs using Table \ref{table 33}, and by Mixing Theorem \ref{thm 2.6} the assertion follows.
		\begin{table}[h!]
			\begin{center}
				\caption{The Latin square of Mixing Theorem \ref{thm 2.6} for $K_{a+10k,b+c+5k,15k}$.}\label{table 33}
				\begin{tabular}{c|c c c} 
					\textbf{ } & \textbf{b} & \textbf{c} & \textbf{5k}\\
					\hline
					\textbf{a} & 5k & 5k & 5k\\
					\textbf{5k} & 5k & 5k & 5k\\
					\textbf{5k} & 5k & 5k & 5k\\
				\end{tabular}
			\end{center}
		\end{table} \label{table 4}
	\end{proof}
	\begin{example}
		$\bm{K_{55,47,45}}$ admits a decomposition into 5-cycles. Put $a=25, \ b=13, \ c=19 $ and use Theorem \ref{5k}. Also,
		$\bm{K_{55,51,45}}$ admits a decomposition into 5-cycles. Use Theorem \ref{5k} for $a=25$, $b=17$, $c=19$.
	\end{example}
	In the following, we complete a result from \cite{7}, where the case with two parts multiples of 5 and $r \leq 75$ remained unsolved.
	\begin{theorem}\label{thm 4.9}
		Let $K_{r,s,t}$ satisfy the necessary conditions where $r,s,t$ are odd $r \in \{31,33,\ldots,75 \}$ and $s,t \in \{45,55,65,75 \}$. Then $K_{r,s,t}$ has a $5$-cycle decomposition.
	\end{theorem}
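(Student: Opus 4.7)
My plan is to reduce every triple in the prescribed range to combinations of small decomposable base cases by means of the Mixing Theorem (Theorem~\ref{thm 2.6}), Theorem~\ref{5k}, and the two other combination theorems proved earlier in this section, together with the $\kappa = 5/3$ result from~\cite{7}.

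First I would trim the hypotheses. Since $s$ and $t$ are multiples of $5$, the divisibility condition $5\mid rs+rt+st$ holds automatically, and the parity condition is given. Hence the only remaining necessary condition to be checked is the triangle-type inequality $t'\leq 4r's'/(r'+s')$ after sorting the triple as $r'\leq s'\leq t'$. This leaves a finite, completely explicit list of candidate triples $(r,s,t)$, which I would tabulate once.

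Next, I would partition this list into three kinds according to the strongest reduction tool that applies. Kind~(i) consists of triples whose sorted form satisfies $t'\leq (5/3)r'$; these are already covered by the $2012$ result of~\cite{7} and need no further work. Kind~(ii) consists of triples obtainable from Theorem~\ref{5k} with a suitable choice of $(a,b,c,k)$, most usefully $k=3$ so that $15k=45$ matches one of the prescribed values of $s$ or $t$; the five auxiliary hypotheses of Theorem~\ref{5k} are then verified using Cavenagh's decomposition of $K_{r,r,s}$ for the $K_{x,15,15}$ subgraphs, together with the small bases $K_{11,15,25}$, $K_{13,15,25}$, $K_{17,15,25}$, $K_{19,15,25}$, $K_{7,17,19}$ from the introduction and Proposition~\ref{9-13-19}. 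Kind~(iii) consists of the residual triples, to which I would apply the two preceding combination theorems of this section (the $(p,p,p)$-augmentation with $p=15$, and the $(a,b,c)+(5k,b,c)+(5k,5k,5k)$ combination with $k=3$), reducing each case to a previously-decomposed small triple such as $K_{7,17,19}$, $K_{9,13,19}$ or $K_{11,13,23}$, exactly as in the examples preceding this theorem.

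The main obstacle I anticipate is bookkeeping: one must show that every triple in the tabulated list does belong to at least one of the three kinds, and that the particular auxiliary graphs that arise for the specific $(a,b,c,k)$ forced upon us are indeed decomposable. I expect a handful of corner triples near the boundary of the triangle inequality to resist all three generic reductions; for those I would fall back either to a direct GAM construction in the style of appendices~\ref{appendix C}--\ref{appendix D}, or to a bespoke $3\times 3$ Latin square in Mixing Theorem~\ref{thm 2.6} that mixes $K_{9,13,19}$ and $K_{11,13,23}$ with suitable $K_{r,r,s}$ blocks from Cavenagh. Once the table is laid out, each individual verification is routine; making certain that every entry in that table is accounted for is where the real work lies.
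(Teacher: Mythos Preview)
Your plan is workable in spirit but far more elaborate than the paper's argument, and your Kind~(i) has a concrete gap. The $\kappa=5/3$ result of~\cite{7} carries a lower bound on the smallest part (the introduction records it as $r\ge 100$), so it does not apply in the range $r\le 75$; indeed Theorem~\ref{thm 4.9} is introduced precisely to close the gap that~\cite{7} leaves when two parts are multiples of~$5$ and $r\le 75$. Hence Kind~(i) is empty, and all the weight of your argument would fall on Kinds~(ii)--(iii) and the GAM fallback---exactly the labor-intensive bookkeeping you hoped to avoid.

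The paper sidesteps all of this with a single uniform application of the Mixing Theorem, essentially your ``bespoke $3\times 3$ Latin square'' fallback promoted to the main (and only) argument. The observation you are missing is that each of $45,55,65,75$ is a sum of exactly three terms from $\{15,25\}$, and each odd $r$ in the stated range can be written as $r=r_1+r_2+r_3$ with $r_i\in\{11,13,15,17,19,25\}$. With $s=s_1+s_2+s_3$ and $t=t_1+t_2+t_3$ split the same way, every block $K_{r_i,s_j,t_k}$ arising in Theorem~\ref{thm 2.6} is one of $K_{r_i,15,15}$, $K_{r_i,15,25}$, $K_{r_i,25,25}$, all of which are already known to be decomposable from~\cite{4,7}. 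One invocation of Theorem~\ref{thm 2.6} then finishes; no tabulation into three kinds, no Theorem~\ref{5k}, no $(p,p,p)$-augmentation, and no direct GAM constructions are needed.
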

	\begin{proof}
		It is possible to write $r=r_{1}+r_{2}+r_{3}$ with $r_{i} \in \{11,13,15,17,19,25 \}$. Since all $K_{r_{i},15,15}$, $K_{r_{i},15,25}$, and $K_{r_{i},25,25}$ that appear are decomposable \cite{4,7} and $K_{r,s,t}$ is a \emph{summation} of three such graphs, by Mixing Theorem \ref{thm 2.6}, $K_{r,s,t}$ is also decomposable.
	\end{proof}
	\begin{theorem}\label{thm 4.10}
		Let $K_{r,s,t}$ satisfy the necessary conditions where $r,s,t$ are odd $r \in \{55,57,\ldots, 125\}$ and $s,t \in \{75,85,95,105,115,125 \}$. Then $K_{r,s,t}$ has a $5$-cycle decomposition.
	\end{theorem}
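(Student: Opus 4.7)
The plan is to generalize the proof of Theorem~\ref{thm 4.9} by applying the Mixing Theorem (Theorem~\ref{thm 2.6}) with a $5\times 5$ Latin square in place of a $3\times 3$ one. Concretely, I would try to write each of $r$, $s$, $t$ as a sum of five parts,
$$
r = r_{1}+\cdots +r_{5},\qquad s = s_{1}+\cdots +s_{5},\qquad t = t_{1}+\cdots +t_{5},
$$
with $r_{i}\in\{11,13,15,17,19,25\}$ and $s_{j},t_{k}\in\{15,25\}$. With these choices, every subgraph $K_{r_{i},s_{j},t_{k}}$ produced by the Mixing Theorem is of the form $K_{r_{i},15,15}$, $K_{r_{i},15,25}$, or $K_{r_{i},25,25}$. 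All of these are decomposable into 5-cycles: the two ``diagonal'' families are covered by Cavenagh's $K_{r,r,s}$ theorem \cite{4}, while the off-diagonal graphs $K_{r_{i},15,25}$ are explicitly decomposed in \cite{7} for $r_{i}\in\{11,13,17,19\}$ (and reduce to $K_{r,r,s}$ when $r_{i}\in\{15,25\}$).

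The first step is to verify that the required partitions exist. For $s$ and $t$ this is immediate: if $s = 15a + 25b$ with $a+b=5$, then $b = (s-75)/10 \in \{0,1,\dots,5\}$, which is exactly the set of six values corresponding to $s\in\{75, 85, \dots, 125\}$, and the same argument works for $t$. For $r$, I would enumerate by the number of $25$'s used: the sums of five parts from $\{11,\dots,19\}$ already cover every odd integer in $[55, 95]$, and introducing $1, 2, \dots, 5$ copies of $25$ shifts the reachable intervals upward so that together they cover every odd value in $\{55, 57, \ldots, 119\}\cup\{125\}$.

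The main obstacle is that this direct enumeration misses precisely $r = 121$ and $r = 123$: a short Diophantine check shows that neither can be written as a sum of five elements of $\{11,13,15,17,19,25\}$. For triples $(r,s,t)$ with $r \in \{121, 123\}$ in which two of the three parts coincide, the graph is $K_{r,r,s}$ up to relabeling and Cavenagh's theorem \cite{4} applies directly. For triples with three distinct parts and $r \in \{121, 123\}$, I would invoke the corollary following Theorem 4.6 to remove an appropriate $2p\cdot(1,1,1)$, taking $p$ to be one of $25$, $35$, or $45$ depending on the relative sizes of $s$ and $t$, and thereby reduce the problem either to a smaller decomposable instance within the range of Theorem~\ref{thm 4.9} or to one of the cases of Theorem~\ref{thm 4.10} already handled by the five-part construction. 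After these few exceptional triples are verified, the Mixing Theorem yields the required 5-cycle decomposition of $K_{r,s,t}$.
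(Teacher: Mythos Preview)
Your main approach is exactly the paper's: write $r=r_1+\cdots+r_5$ with $r_i\in\{11,13,15,17,19,25\}$, write $s$ and $t$ as sums of five elements of $\{15,25\}$, and apply the Mixing Theorem with a $5\times5$ Latin square, using that every $K_{r_i,15,15}$, $K_{r_i,15,25}$, $K_{r_i,25,25}$ is known to be decomposable. The paper's proof is just these two sentences and does not discuss the representability of $r$ any further.

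You actually go beyond the paper by noticing that the representability claim fails for $r\in\{121,123\}$; your Diophantine check is correct, and this is a genuine oversight in the paper's argument. However, your proposed patch via the corollary to the ``$+2p$'' theorem does not go through as written. Take the concrete triple $(r,s,t)=(121,75,85)$, which satisfies the necessary conditions. In sorted order this is $(75,85,121)$, and subtracting $2p$ for $p\in\{25,35,45\}$ gives $(25,35,71)$, $(5,15,51)$, or a triple with negative entries. In the first case the hypothesis $t'/2\le p$ fails ($35.5>25$); in the second the hypothesis $p\le 3r'$ fails ($35>15$); and neither reduced triple lands in the range of Theorem~\ref{thm 4.9} or in the already-handled part of Theorem~\ref{thm 4.10} anyway. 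The same obstruction appears for several other triples with $r\in\{121,123\}$ and small $s,t$.

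If you want to close the gap within the paper's framework, the cleanest route is to supply the two missing building blocks $K_{21,15,25}$ and $K_{23,15,25}$ (both satisfy the necessary conditions), after which $121=4\cdot25+21$ and $123=4\cdot25+23$ feed directly into the five-part Mixing argument.
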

	\begin{proof}
		It is possible to write $r=r_{1}+\cdots+r_{5}$ with $r_{i} \in \{11,13,15,17,19,25 \}$. Since $K_{r_{i},15,15}$, $K_{r_{i},15,25}$, and $K_{r_{i},25,25}$ are decomposable and $K_{r,s,t}$ is a \emph{summation} of five such graphs, by Mixing Theorem \ref{thm 2.6}, $K_{r,s,t}$ is also decomposable.
	\end{proof}
	
	As we see, Theorems \ref{thm 4.9} and \ref{thm 4.10} can be generalized to combine several graphs to obtain more decompositions.
	\begin{theorem}
		Let $m$ be an odd integer and suppose $K_{r,s,t}$  with $11m \leq r \leq 25m $ and  $15m \leq s,t \leq 25m$ satisfy the necessary conditions where $r,s,t$ are odd and at least two parts are multiples of $5$. Then $K_{r,s,t}$ has a $5$-cycle decomposition.
	\end{theorem}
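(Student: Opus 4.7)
The plan is to extend the arguments of Theorems~\ref{thm 4.9} and~\ref{thm 4.10} from the specific parameters $m=3,5$ to an arbitrary odd $m$, and to invoke the Mixing Theorem~\ref{thm 2.6} with an $m\times m$ Latin square whose atomic ingredients are the decomposable graphs $K_{a,b,c}$ with $a\in\{11,13,15,17,19,25\}$ and $b,c\in\{15,25\}$.

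First I would write
\begin{align*}
r &= r_1 + \cdots + r_m, & r_i &\in \{11,13,15,17,19,25\},\\
s &= s_1 + \cdots + s_m, & s_j &\in \{15,25\},\\
t &= t_1 + \cdots + t_m, & t_k &\in \{15,25\}.
\end{align*}
The expansions of $s$ and $t$ are automatic, since every odd multiple of $5$ in $[15m,25m]$ is of the form $15(m-a)+25a$ for a unique $a\in\{0,1,\dots,m\}$. For $r$ a short counting argument shows that, if exactly $\ell$ of the summands equal $25$ and the remaining $m-\ell$ are drawn from $\{11,13,15,17,19\}$, one realizes every odd integer in the interval $[11m+14\ell,\,19m+6\ell]$; the union of these intervals over $\ell=0,1,\dots,m$ covers every odd $r\in[11m,25m]$ with the possible exception of the two boundary values $25m-2$ and $25m-4$.

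Next I would fix any $m\times m$ Latin square $L$ and associate to cell $(i,j)$ the subgraph $K_{r_i,\,s_j,\,t_{L(i,j)}}$. Every such subgraph is of the form $K_{a,15,15}$, $K_{a,15,25}$, or $K_{a,25,25}$ with $a\in\{11,13,15,17,19,25\}$: the first and last types are $K_{r,r,s}$-graphs handled by Cavenagh~\cite{4}, while the middle type is either again of $K_{r,r,s}$-form (when $a\in\{15,25\}$) or one of the four explicit decompositions $K_{11,15,25},K_{13,15,25},K_{17,15,25},K_{19,15,25}$ from~\cite{7}. An application of the Mixing Theorem then assembles these pieces into a $5$-cycle decomposition of $K_{r,s,t}$, giving the result.

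The main obstacle I anticipate is closing the gap at $r\in\{25m-2,\,25m-4\}$, which cannot be expressed as an $m$-term sum of elements of $\{11,13,15,17,19,25\}$. I would treat these cases separately: first check whether the third necessary condition $t\le 4rs/(r+s)$, combined with $s,t$ being odd multiples of $5$ in $[15m,25m]$, already excludes them; if some admissible triples remain, enlarge the pool of building blocks to include odd values such as $a\in\{21,23\}$, after verifying via the GAM method of Section~3 (or the composite constructions of Section~4) that the auxiliary graphs $K_{21,b,c}$ and $K_{23,b,c}$ with $b,c\in\{15,25\}$ satisfying the necessary conditions are themselves decomposable into $5$-cycles. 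This would restore the continuity of the representable range and complete the argument for all admissible $(r,s,t)$.
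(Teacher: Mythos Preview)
Your plan is exactly the paper's argument: split $s$ and $t$ into $m$ summands from $\{15,25\}$, split $r$ into $m$ summands from $\{11,13,15,17,19,25\}$, and apply the Mixing Theorem with an $m\times m$ Latin square, using the building blocks $K_{a,15,15}$, $K_{a,15,25}$, $K_{a,25,25}$ from \cite{4} and \cite{7}. The paper's proof is essentially the three sentences you wrote before the ``obstacle'' paragraph.

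Where you go further than the paper is in noticing the gap at $r\in\{25m-4,\,25m-2\}$: these values are genuinely \emph{not} representable as an $m$-term sum from $\{11,13,15,17,19,25\}$, and the paper's proof simply asserts such a representation exists without addressing this. So you have spotted a lacuna in the published argument (the same issue already bites in Theorem~\ref{thm 4.9} at $r=71,73$).

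Of your two proposed remedies, option (a) does not succeed: for any odd $m$ the triple $(25m-2,\,25m,\,25m)$ satisfies all three necessary conditions (the third reduces to $50m-2\le 100m-8$), so the boundary cases are not vacuous. Option (b) is the right idea, but it shifts the burden to establishing $5$-cycle decompositions of $K_{21,15,25}$ and $K_{23,15,25}$; these are not among the explicit decompositions provided in \cite{7} or elsewhere in the paper, so you would have to supply them (e.g.\ via GAM) to close the argument. With those two extra base cases in hand, your enlarged pool $\{11,13,15,17,19,21,23,25\}$ does cover every odd $r\in[11m,25m]$ and the Mixing Theorem finishes the proof.
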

	\begin{proof}
		It is possible to write $t=t_{1}+\cdots+t_{m}$ and $s=s_{1}+\cdots+s_{m}$ with $s_{i}, t_{i} \in  \{15,25\}$. Also, since $r \leq 25m$ it is possible to write $r=r_{1}+\cdots+r_{m}$ with $r_{i} \in \{11,13,15,17,19,25 \}$.
		Moreover, since $K_{r_{i},15,15}$, $K_{r_{i},15,25}$, and $K_{r_{i},25,25}$ are decomposable and $K_{r,s,t}$ is a summation of these graphs, by Mixing Theorem \ref{thm 2.6}, $K_{r,s,t}$ is also decomposable.\\
	\end{proof}
	Note that if $t=3r$, then clearly $(r,s,t)$ is a multiple of $(1,3,3)$ and hence $k_{r,3r,3r}$ is decomposable. So, $K_{r,s,t}$ where $s,t$ are multiples of $5$ is proved to admit a $5$-cycle decomposition unless when $2.27r \le t < 3r$. The above theorems solve many of the cases in this gap.
	
	Finally, we would like to introduce the \textbf{strategy} we used for decomposing some graphs (e. g. $K_{11,13,23}$) with the hope to facilitate decomposition of the remaining graphs satisfying the necessary conditions. 
	\\Note that cycles of type 1 and 3 always differ by a multiple of $s$.
	\begin{lemma} \label{lemma s}
		Let $c_{1}$, $c_{2}$, $c_{3}$ be the number of cycles of type 1, 2, and 3 in $K_{r,s,t}$, respectively. Then we have $c_{1}\equiv c_{3} \ \Mod{s}$.
	\end{lemma}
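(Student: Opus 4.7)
The plan is a direct computation using the explicit formulas for $c_1, c_2, c_3$ given in Equation (\ref{prop 2.2}), combined with the parity necessary condition $r\equiv s\equiv t \Mod 2$.

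First I would compute the difference $c_1 - c_3$ straight from the formulas:
\begin{equation*}
c_1 - c_3 = \frac{(4st-rt-rs) - (-st-rt+4rs)}{10} = \frac{5st - 5rs}{10} = \frac{s(t-r)}{2}.
\end{equation*}
The $r$ and $t$ terms in the numerator cancel cleanly, leaving a factor of $s$ times $(t-r)/2$. Since $K_{r,s,t}$ satisfies the necessary conditions, in particular $r \equiv t \Mod 2$, so $(t-r)/2$ is an integer. Hence $c_1 - c_3$ is an integer multiple of $s$, which is exactly $c_1 \equiv c_3 \Mod s$.

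There is no real obstacle here; the only thing to be careful about is invoking the parity condition to conclude that $(t-r)/2 \in \mathbb{Z}$, without which the congruence would only hold modulo $s/\gcd(2,s)$. The remark preceding the lemma (that cycles of type 1 and 3 differ by a multiple of $s$) is thus just a reading of the identity $c_1 - c_3 = s\cdot (t-r)/2$.
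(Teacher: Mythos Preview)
Your proof is correct and is essentially the same as the paper's: both compute $c_1-c_3=\frac{s(t-r)}{2}$ directly from Equation~(\ref{prop 2.2}) and then use the parity condition to conclude $(t-r)/2\in\mathbb{Z}$. The only cosmetic difference is that the paper phrases the parity step as ``$r$ and $t$ are odd'' (reflecting the surrounding context), whereas you invoke the more general $r\equiv t\Mod 2$.
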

	\begin{proof}
		We show  $c_{1}-c_{3}\equiv 0 \ \Mod{s}$. Since $r$ and $t$ are odd integers, $\frac{t-r}{2}$ is an integer and we have\\
		$ \frac{4st-rt-rs}{10}-\frac{-st-rt+4rs}{10} \equiv \frac{5st-5rs}{10} \equiv \frac{st-rs}{2} \equiv s\frac{t-r}{2} \equiv 0  \ \Mod{s}$
	\end{proof}
	
	\textbf{Strategy}\\
	Let $K_{r,s,t}$ with $r \leq s \leq t$ be a complete tripartite graph satisfying the necessary conditions.
	\begin{itemize}
		\item [1.] Use the GAM method and draw the upper part of the adjacency matrix for $K_{s,r,t}$.
		\item [2.] Find cycles of type 1 with the following formula and let at least 4 columns remain. Note that in this step, one should either fill an entire column or leave it empty. If necessary, in one step we may shift all vertices labeled $t_{i}$ by a constant (e.g. 1 or 2).
		$$(r_{i},s_{j},t_{j+3(i-1)},s_{j+2},t_{j+3(i-1)+1})$$
		$$ \text{for} \ j=1, \ldots ,s \ \text{and} \ i=1,\ldots, \min \{r-4, \lfloor c_{1}/s \rfloor\} $$
		\item [3.] Find cycles of type 2 using one column of the first part together with a diagonal in the second part, till only 4 columns remain unoccupied.
		\item [4.] In the remaining 4 columns use the pattern as in Figure \ref{fig:10} (pink parts) to find $c_{1}'\equiv c_{1} \  \Mod{s}$ cycles of type 1 and $c_{3}'\equiv c_{3} \ \Mod{s}$ cycles of type 3. This is possible as proved in Lemma \ref{lemma s}. 
		\item [5.] At this point, one should try to fill the adjacency matrix in such a way that the part beneath occupied entries are filled first. This step works with try and error so we called the whole process a strategy rather than an algorithm.
	\end{itemize}
	The use of this Strategy is illustrated in decomposition of $K_{11,13,23}$ (See Figure \ref{fig:10}).

	\section{Conclusions}
	
	When a complete tripartite graph $k_{r,s,t}$ has a decomposition into 5-cycles, then the number of edges must divide 5; $(r,s,t)$ must be all even or all odd; and $t \leq 4rs/(r+s)$. These necessary conditions are proved to be sufficient in some cases, however their sufficiency in general is a conjecture. 
	
	In this paper, the Graph Adjacency Matrix (GAM) method was introduced as an approach for 5-cycle decomposition. This method is an appropriate tool to explain the logic behind previous graph decompositions. Consequently, using this method we decomposed $K_{9,13,19}$ and $K_{11,13,23}$ for the first time. Moreover, providing a “combining rule” for decomposable graphs, stated as Mixing Theorem \ref{thm 2.6}, we decomposed more graphs into $5$-cycles for some of which no decomposition was known before. The following results are deduced from the Mixing Theorem:
		\begin{itemize}
		\item $K_{r,s,t}$  with $11m \leq r \leq 25m $ and  $15m \leq s,t \leq 25m$ was shown to have a decomposition for $r,s,t$ and $m$ odd where at least two parts are multiples of $5$.
		\item $K_{r+np,s+np,t+np}$ has a 5-cycle decomposition provided that $K_{r,s,t}$ has a 5-cycle decomposition where $p$ is a multiple of $5$ with $p \equiv r \Mod{2}$ and $\frac{t}{2} \leq p \leq 3r$ for $r \leq s \leq t$. 
		\item $K_{a+10k,2b+5k,2c+5k}$ is decomposed into 5-cycles, if $K_{a,b,c}$, $K_{5k,b,c}$, $K_{b,5k,5k}$, and $K_{c,5k,5k}$ are decomposable.
		\item $K_{a+10k,2b+5k,2c+5k}$ is decomposed into 5-cycles, if $K_{a,b,c}$, $K_{5k,b,c}$, $K_{b,5k,5k}$, and $K_{c,5k,5k}$ are decomposable.
		\end{itemize}
		
	Moreover, some new relations and properties of the graphs satisfying the necessary conditions were found with the hope to illuminate the unknown aspects of this Conjecture. The Graph Adjacency Matrix (GAM) method is a general approach that can be used in many graph decomposition problems. 
	Furthermore, if the correctness of Conjecture could be proved for the remaining cases, then one could say the necessary conditions are also sufficient.
	A historical review of the problem of decomposing complete tripartite graphs into 5-cycles is provided in Table \ref{tab:table1}.
	
	\subsection{List of primitive graphs}\label{appendix E}
	Here, we present the list of primitive graphs $k_{r, s, t}$, where $\gcd (r,s,t) = 1,2,$ or $5$,
	with $r\leq 11$ that satisfy the necessary conditions as well as their decomposition status. Moreover, we mark that the smallest undecomposed graph till now is $K_{9,19,23}$. 
	In this work, we proved decomposability of infinitely many graphs into 5-cycles for the first time (e. g. $K_{9,13,19}$, $K_{11,13,23}$,
	$K_{29,45,55}$, $K_{29,45,65}$, $K_{29,55,65}$, $K_{29,55,75}$, $K_{29,65,75}$, $K_{31,45,65}$). 
	\begin{table}[H]
		\begin{center}
			\caption{Some primitive graphs satisfying the necessary conditions and their decomposition status.}
			\label{tab:table1}
			\begin{tabular}{c|c|c|c} 
				\textbf{r} & \textbf{s} & \textbf{t}& \textbf{Status-year}\\
				\hline
				1 & 3 & 3 & decomposed \cite{3}-1995\\
				\hline
				2 & 2 & 4 & decomposed \cite{3}-1995\\
				2 & 6 & 6 &  decomposed \cite{3}-1995\\
				\hline
				3 & 5 & 5 &  decomposed \cite{3}-1995\\
				\hline
				4 & 10 & 10 &  decomposed \cite{3}-1995\\
				\hline
				5 & 5 & 5 &  decomposed \cite{3}-1995\\
				5 & 5 & 7 &  decomposed \cite{4}-2000\\
				5 & 5 & 9 &  decomposed \cite{3}-1995\\
				\hline
				6 & 8 & 8 &  decomposed \cite{3}-1995\\
				6 & 12 & 16 & decomposed \cite{12}-2000\\
				& & & also decomposed in this work\\
				\hline
				7 & 7 & 9 &  decomposed \cite{3}-1995\\
				7 & 11 & 11 & decomposed \cite{3}-1995\\
				7 & 15 & 15 &  decomposed \cite{3}-1995\\
				7 & 17 & 19 & decomposed \cite{7}-2012\\
				\hline
				8 & 10 & 10 & decomposed \cite{3}-1995\\
				8 & 14 & 14 & decomposed \cite{3}-1995\\
				8 & 16 & 18 & decomposed \cite{12}-2000\\
				\hline
				9 & 9 & 13 & decomposed \cite{3}-1995\\
				9 & 13 & 19 & decomposed in this work\\
				9 & 17 & 17 & decomposed \cite{3}-1995\\
				9 & 19 & 23 & \textit{not decomposed yet.}\\
				9 & 25 & 25 & decomposed \cite{3}-1995\\
				\hline
				10 & 10 & 12 &  decomposed \cite{3}-1995\\
				10 & $s$ & $t$ & (all even) decomposed \cite{12}-2000\\
				10 & 20 & 26 & decomposed \cite{12}-2000\\
				\hline
				11 & 11 & 17 & decomposed \cite{3}-1995\\
				11 & 13 & 13 & decomposed \cite{3}-1995\\
				11 & 13 & 23 & decomposed in this work\\
				$\cdots$ & $\cdots$ & $\cdots$ &\\
			\end{tabular}
		\end{center}
	\end{table} \label{table1}
	
	\bibliographystyle{plain} 
	\bibliography{biblio}

\begin{thebibliography}{10}

\bibitem{8}
M.~Abdolmaleki, S.~Gh. Ilchi, E.~S. Mahmoodian, and M.~A. Shabani.
\newblock On decomposing complete tripartite graphs into 5-cycles.
\newblock {\em arXiv:1907.06187}, 2019.

\bibitem{7}
S.~Alipour, E.~S. Mahmoodian, and E.~Mollaahmadi.
\newblock On decomposing complete tripartite graphs into 5-cycles.
\newblock {\em Australas. J. Combin.}, 54:289--301, 2012.

\bibitem{6}
E.~J. Billington and N.~J. Cavenagh.
\newblock Decomposing complete tripartite graphs into 5-cycles when the partite
  sets have similar size.
\newblock {\em Aequationes Math.}, 82(3):277--289, 2011.

\bibitem{12}
N.~J. Cavenagh.
\newblock Further decompositions of complete tripartite graphs into 5-cycles.
\newblock {\em Discrete Math.}, 256(1-2):55--81, 2002.

\bibitem{4}
N.~J. Cavenagh and E.~J. Billington.
\newblock On decomposing complete tripartite graphs into 5-cycles.
\newblock {\em Australas. J. Combin.}, 22:41--62, 2000.

\bibitem{3}
E.~S. Mahmoodian and M.~Mirzakhani.
\newblock Decomposition of complete tripartite graphs into {$5$}-cycles.
\newblock In {\em Combinatorics advances ({T}ehran, 1994)}, volume 329 of {\em
  Math. Appl.}, pages 235--241. Kluwer Acad. Publ., Dordrecht, 1995.

\bibitem{nash1960decomposition}
C.~St.~JA. Nash-Williams.
\newblock Decomposition of graphs into closed and endless chains.
\newblock {\em Proceedings of the London Mathematical Society}, 3(1):221--238,
  1960.

\bibitem{13}
A.~Rosa.
\newblock O cyklick \'{y}ch rozkladoch kompletn\'{e}ho grafu na
  nep\'{a}rnouholn\'{i}ky.
\newblock {\em \u{C}as. P\u{e}st. Mat.}, 91:53–--63, 1966.

\bibitem{2}
D.~Sotteau.
\newblock Decomposition of {$K_{m,n}\ (K^{^{\ast} }_{m,n})$} into cycles
  (circuits) of length {$2k$}.
\newblock {\em J. Combin. Theory Ser. B}, 30(1):75--81, 1981.

\bibitem{thomassen2017nash}
C.~Thomassen.
\newblock Nash-williams’ cycle-decomposition theorem.
\newblock {\em Combinatorica}, 37(5):1027--1037, 2017.

\end{thebibliography}
	
	\newpage
	\section{Appendix}
	\subsection{Decomposition of $K_{6,12,16}$}\label{appendix A}
	Here, we give the 5-cycle decomposition of $K_{6,12,16}$ in GAM representation, step by step. 
	When all cells of the graph adjacency matrix are colored, each edge is part of a 5-cycle and the decomposition is complete.
	\begin{center}
		\includegraphics[width=9cm]{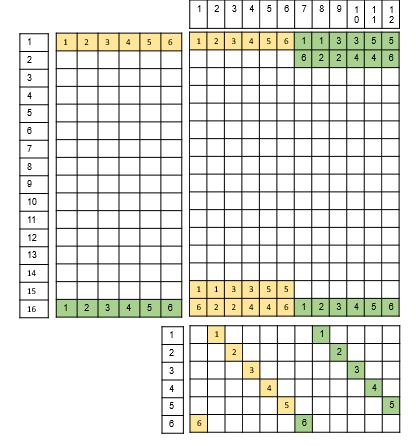}\\
		\includegraphics[width=9cm]{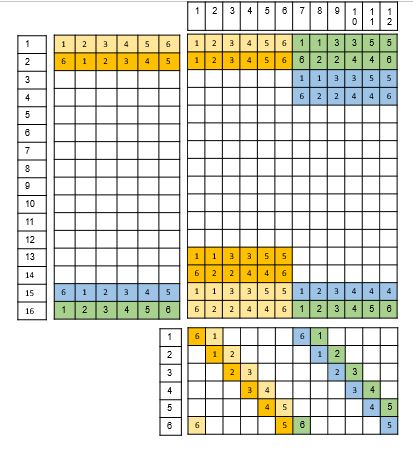}\\
		\includegraphics[width=9cm]{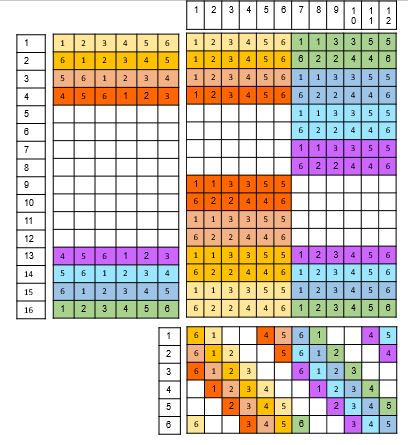}\\
		\includegraphics[width=9cm]{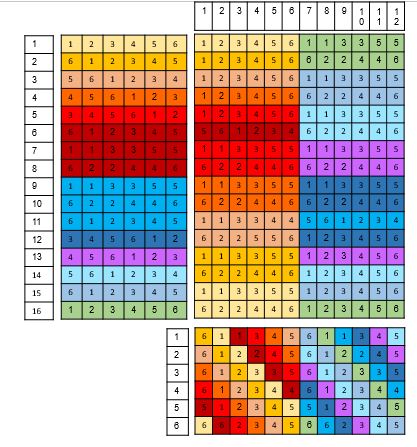}
		\captionof{figure}{Decomposition of $K_{6,12,16}$ into 5-cycles using GAM method}
	\end{center}
	\newpage
	\subsection{Decomposition of $K_{10,12,20}$}\label{appendix B}
	Here, we give the 5-cycle decomposition of $K_{10,12,20}$ in GAM representation, step by step.
	\begin{center}
		\includegraphics[width=12cm]{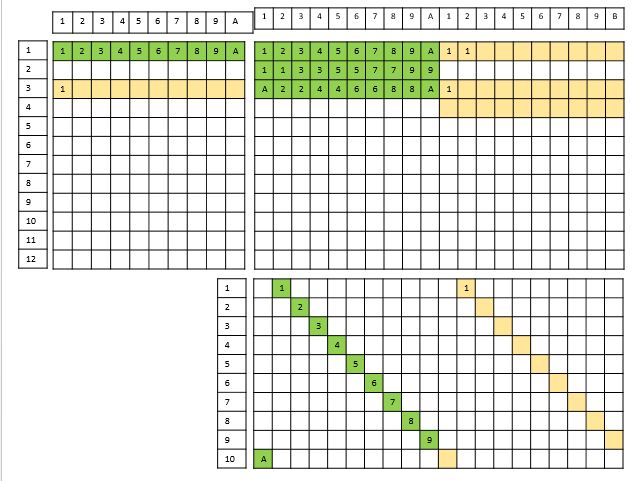}\\
		\includegraphics[width=12cm]{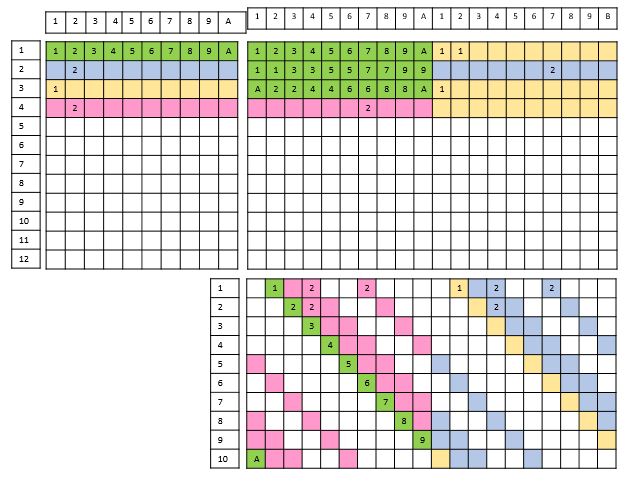}\\
		\includegraphics[width=12cm]{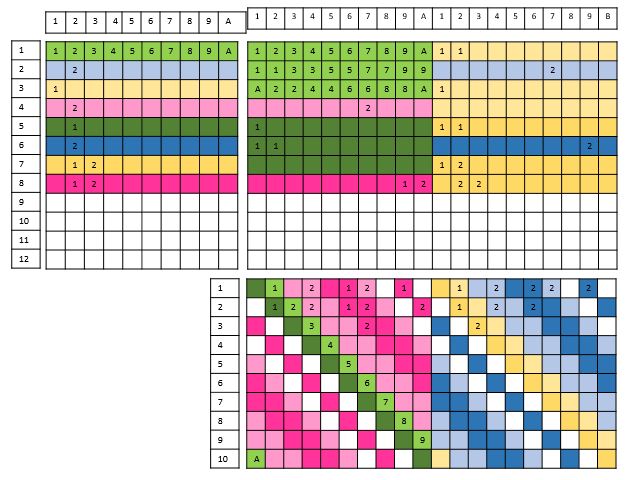}\\
		\includegraphics[width=12cm]{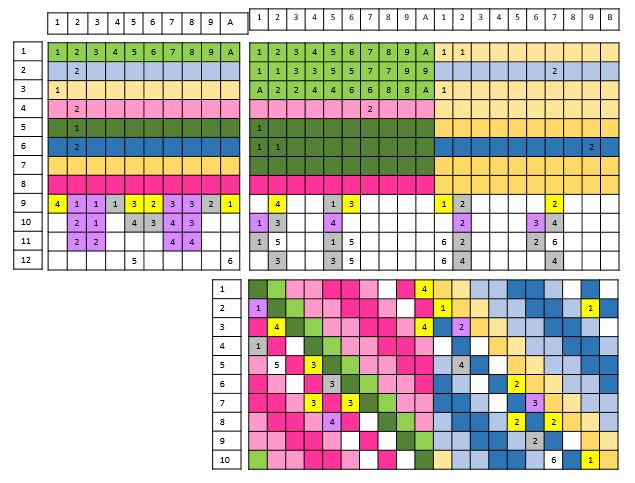}\\
		\includegraphics[width=12cm]{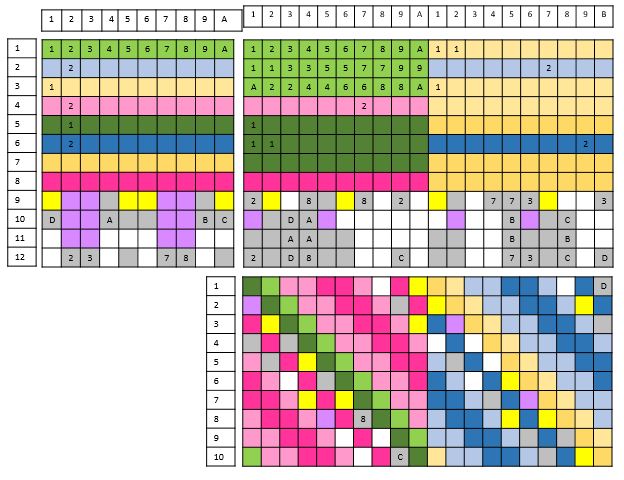}\\
		\includegraphics[width=12cm]{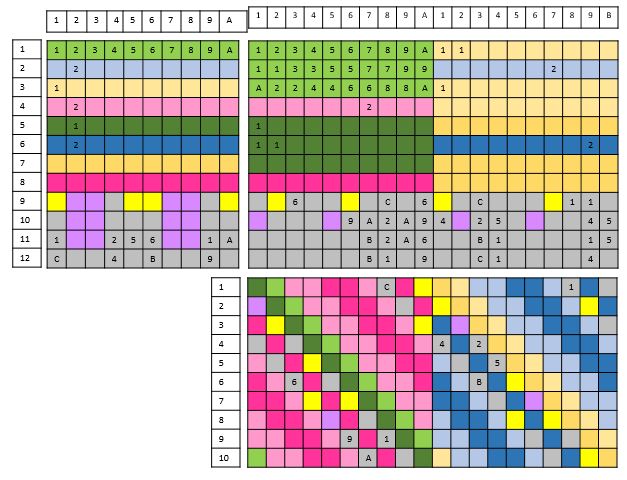}
		\captionof{figure}{Decomposition of $K_{10,12,20}$ into 5-cycles using GAM method}
	\end{center}
	\newpage
	\subsection{Decomposition of $K_{9,13,19}$}\label{appendix C}
	Here, we give the 5-cycle decomposition of $K_{9,13,19}$ in GAM representation, step by step. 
	\begin{center}
		\includegraphics[width=12cm]{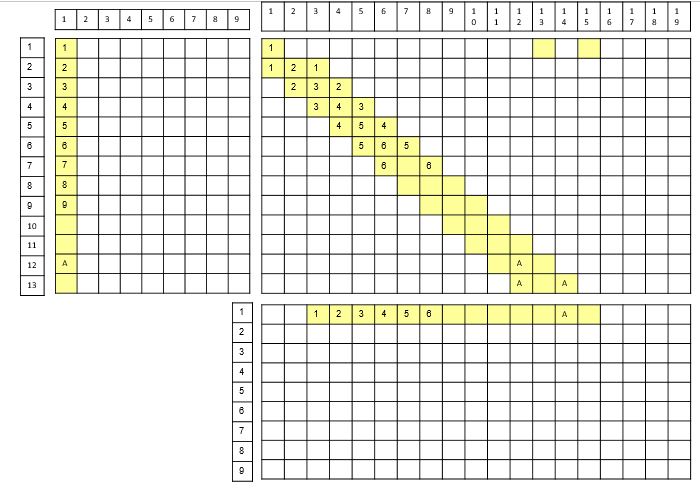}\\
		\includegraphics[width=12cm]{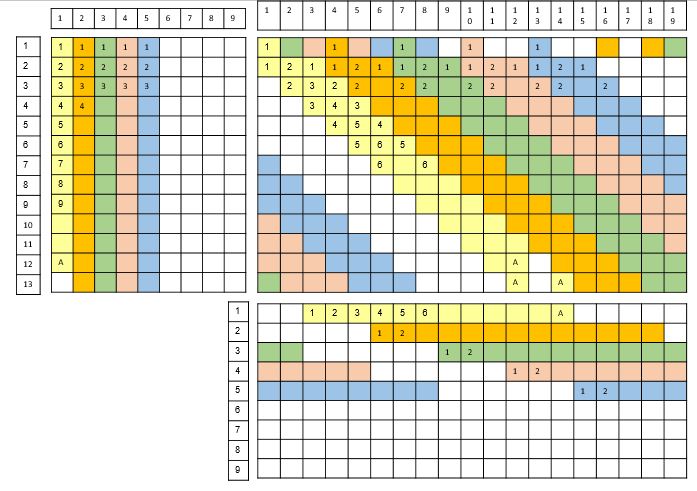}\\
		\includegraphics[width=12cm]{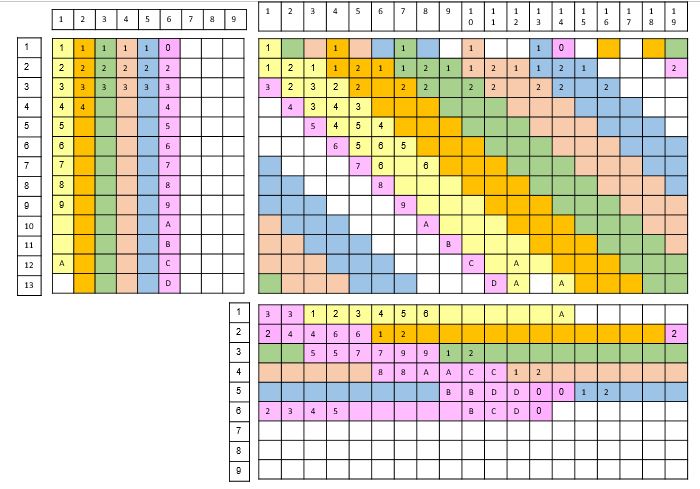}\\
		\includegraphics[width=12cm]{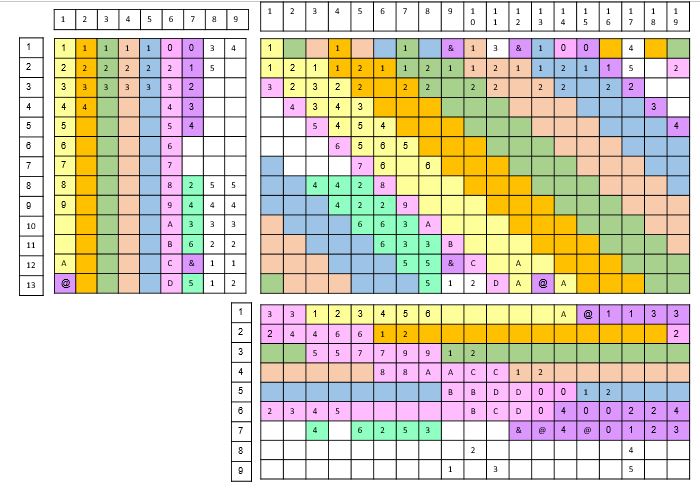}\\
		\includegraphics[width=12cm]{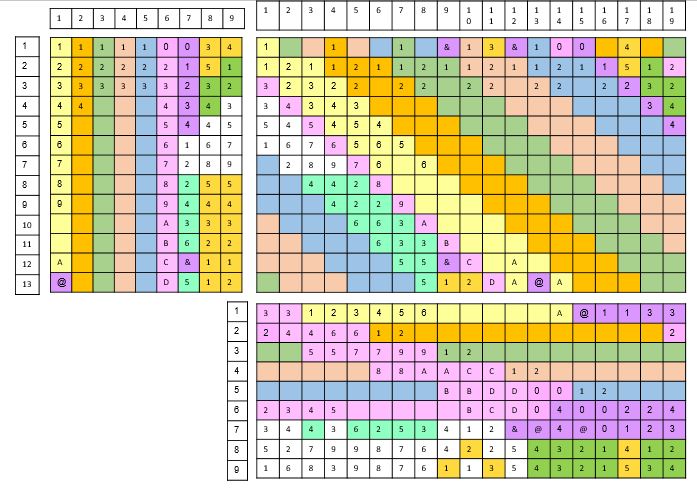}\\
		\captionof{figure}{Decomposition of $K_{9,13,19}$ into 5-cycles using GAM method}
	\end{center}
	\subsection{Decomposition of $K_{11,13,23}$}\label{appendix D}
	Here, we give the 5-cycle decomposition of $K_{11,13,23}$ in GAM representation, step by step. 
	\begin{center}
		\includegraphics[width=12cm]{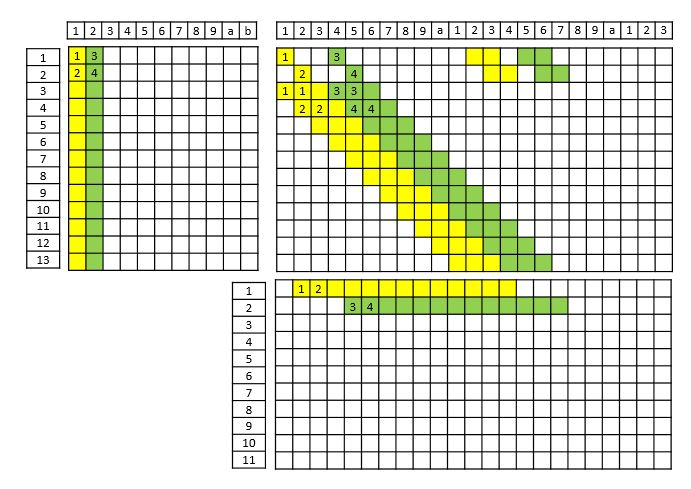}\\
		\includegraphics[width=12cm]{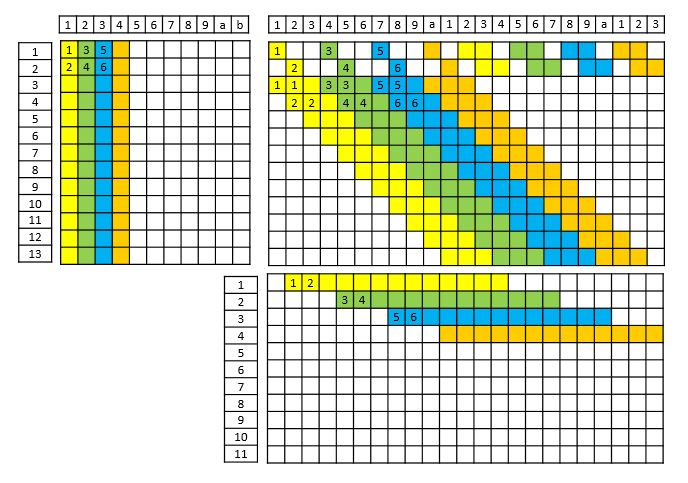}\\
		\includegraphics[width=12cm]{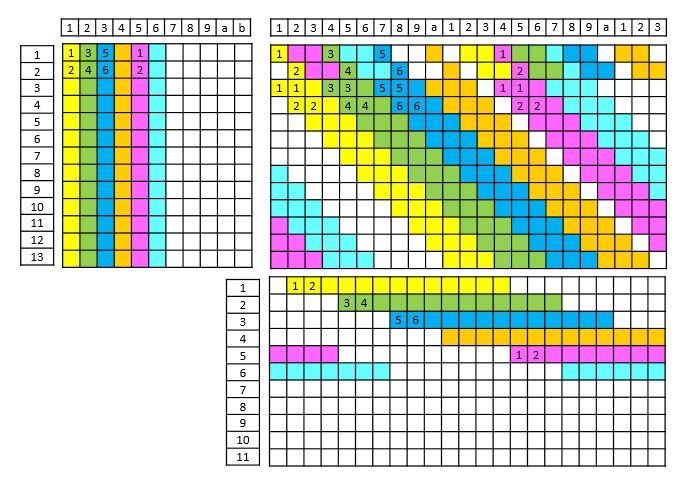}\\
		\includegraphics[width=12cm]{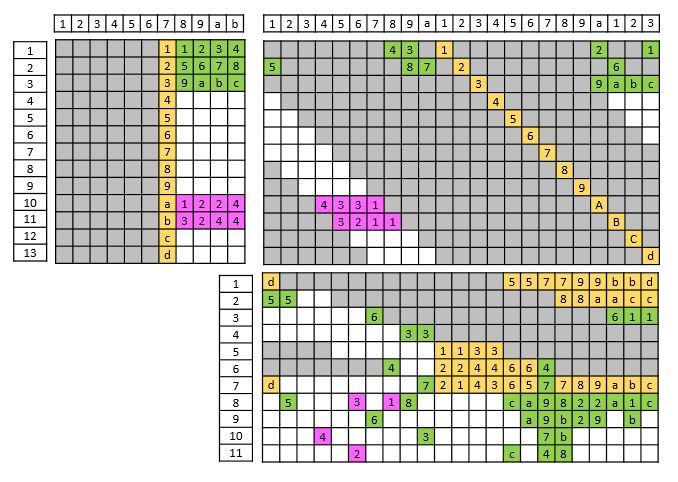}\\
		\includegraphics[width=12cm]{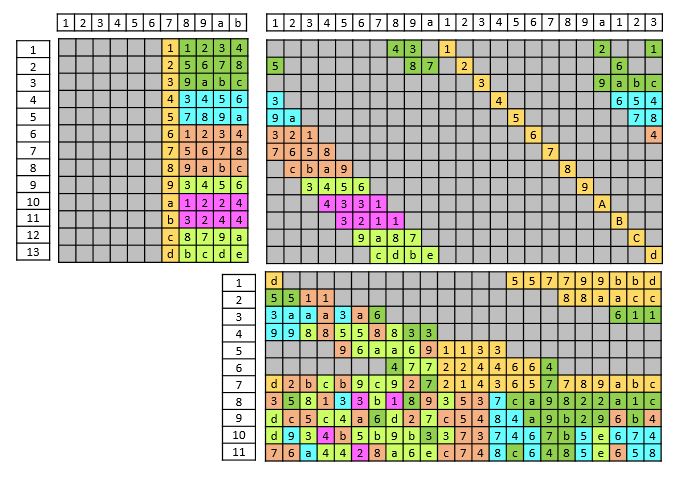}\\
		\captionof{figure}{Decomposition of $K_{11,13,23}$ into 5-cycles using GAM method} \label{fig:10}
	\end{center}
	
	
\end{document}